%
%
\documentclass[reqno,11pt,a4paper]{amsart}
\usepackage{amsfonts,graphics,amssymb,amsmath,mathrsfs,graphicx,amsthm,sidecap}
\usepackage[mathscr]{euscript}
\usepackage{times}
\usepackage[usenames,dvipsnames]{xcolor}
\DeclareMathOperator{\id}{id}
\DeclareMathOperator{\spann}{span}		
\newcommand{\I}{{\mathbb I}}
\newcommand{\N}{{\mathbb N}}
\newcommand{\R}{{\mathbb R}}
\newcommand{\Z}{{\mathbb Z}}
\newcommand{\cA}{{\mathcal A}}
\newcommand{\cB}{{\mathcal B}}

\newcommand{\cU}{{\mathcal U}}
\newcommand{\eF}{{\mathscr F}}

\newcommand{\eps}{\varepsilon}
\newcommand{\tm}{\times}
\newcommand{\vphi}{\varphi}
\newcommand{\cAn}{\cA_n}
\newcommand{\phin}{\phi^n}
%
%
\newcommand{\intoo}[1]{\left(#1\right)}							
\newcommand{\set}[1]{\left\{#1\right\}}							
\newcommand{\abs}[1]{\left|#1\right|}							
\newcommand{\norm}[1]{\left\|#1\right\|}						
\newcommand{\dist}[1]{\operatorname{dist}\intoo{#1}}			

\newcommand{\fall}{\quad\text{for all }}

\newcommand{\eref}[1]{Ex.~\ref{#1}}
\newcommand{\lref}[1]{Lemma~\ref{#1}}
\newcommand{\pref}[1]{Prop.~\ref{#1}}
\newcommand{\tref}[1]{Thm.~\ref{#1}}
\newcommand{\cref}[1]{Cor.~\ref{#1}}
\newcommand{\fref}[1]{Fig.~\ref{#1}}

\newcommand{\sref}[1]{Sect.~\ref{#1}}
\newcommand{\Cd}{C_d}

%
%
\theoremstyle{plain}
\newtheorem{thm}{Theorem}[section]
\newtheorem{cor}[thm]{Corollary}
\newtheorem{lem}[thm]{Lemma}
\newtheorem{prop}[thm]{Proposition}
\newtheorem*{hyp}{Hypothesis}

\theoremstyle{definition}

\newtheorem{exam}[thm]{Example}
\newtheorem{rem}[thm]{Remark}
\numberwithin{equation}{section}
\allowdisplaybreaks
\hyphenation{in-te-gro-dif-fer-en-ce non-au-to-no-mous}
\begin{document}

\title[Attractors of integrodifference equations under discretization]{Numerical Dynamics of Integrodifference Equations: Forward Dynamics and Pullback Attractors\footnote{The work of Huy Huynh has been supported by the Austrian Science Fund (FWF) under grant number P 30874-N35.}\footnote{Peter E.\ Kloeden thanks the Universit\"at Klagenfurt for support and hospitality.}}
\author[Huy Huynh, Peter E.\ Kloeden, Christian P\"otzsche]{Huy Huynh$^\dag$, Peter E.\ Kloeden$^\ast$ and Christian P\"otzsche$^\dag$}
\address{\llap{$\dag$} Institut f\"ur Mathematik, 
Universit\"at Klagenfurt, 
9020 Klagenfurt, Austria}
\email{\{pham.huynh,christian.poetzsche\}@aau.at}
\address{\llap{$\ast$} Mathematisches Institut, 
Universit\"at T\"ubingen, 
72076 T\"ubingen, Germany}
\email{kloeden@na.uni-tuebingen.de}
\subjclass{37C70; 37G35; 45G15; 65R20; 65P40}
\keywords{Forward attractor; Pullback attractor; Urysohn operator; Projection method}

\begin{abstract}
	In order to determine the dynamics of nonautonomous equations both their forward and pullback behavior need to be understood. For this reason we provide sufficient criteria for the existence of such attracting invariant sets in a general setting of nonautonomous difference equations in metric spaces. In addition it is shown that both forward and pullback attractors, as well as forward limit sets persist and that the latter two notions even converge under perturbation. As concrete application, we study integrodifference equation under spatial discretization of collocation type. 
\keywords{Integrodifference equation \and Pullback attractor \and Forward attractor \and Urysohn operator}
\end{abstract}
\maketitle
%
%
%
\section{Introduction}
\subsection{Perturbation of attractors}
Obtaining a precise insight into the structure of a global attractor can be seen as the holy grail in the field of dissipative dynamical systems. This compact, invariant set attracting each bounded subset of the state space contains all equilibria, periodic solutions, homo- or heteroclinic connections and overall the essential dynamics. In general the structure of a global attractor can be rather complex and therefore it is no surprise that such objects behave sensitively w.r.t.\ perturbations and parameter variations \cite{hale:88}. However, under mild conditions global attractors are upper-semicontinuous in parameters. Criteria for lower semicontinuity and hence for full continuity in the Hausdorff metric are much harder to prove and often involve conditions being difficult to verify. Nevertheless it was shown in \cite{hoang:olson:robinson:15} that global attractors are at least continuous on a residual subset of the parameter space. 

For nonautonomous dynamical systems \cite{bortolan:carvalho:langa:20,carvalho:langa:robinson:10,kloeden:rasmussen:10,kloeden:yang:21} the range of possible long term behaviors, as well as the theory of attractors is understandably more involved. For instance, in order to obtain a full picture of the dynamics it might be necessary to consider different attractor concepts simultaneously. Although the notion of a pullback attractor shares several properties of the global attractor it is easy to construct systems with totally different forward dynamics but sharing the same pullback attractor \cite{kloeden:poetzsche:rasmussen:11}. This deficit stimulated the development of forward attractors \cite{kloeden:lorenz:16}. While a perturbation theory for pullback attractors can be found in \cite{bortolan:carvalho:langa:20,carvalho:langa:robinson:10} and their continuity on a residual set is due to \cite{hoang:olson:robinson:18}, related results for forward attractors are sparse, which might be due to their nonuniqueness and recent origin. An exception is \cite{kloeden:16} addressing the upper-semicontinuity of forward limit sets under time discretizations of finite-dimensional nonautonomous ODEs. 

Indeed, particularly important classes of perturbations originate in numerical analysis, when an equation generating a dynamical system is discretized. It actually is a key topic in Numerical Dynamics \cite{stuart:humphries:98} to determine how attractors behave under discretization. This research was initiated in \cite{kloeden:lorenz:86} for temporal discretizations of autonomous ODEs in $\R^d$, extensions to pullback attractors of nonautonomous ODEs can be found in \cite{cheban:kloeden:schmalfuss:00}, and related results for semiflows generated by PDEs are due to \cite{hale:lin:raugel:88,kloeden:lorenz:89} and \cite{stuart:95}. 

Forward attractors and limit sets of discrete time nonautonomous dynamical systems in infinite dimensions were constructed and studied in \cite{huynh:poetzsche:kloeden:20} providing a theoretical foundation. As a continuation, this paper addresses the persistence of forward attractors and establishes the upper-semicontinuity of forward limit sets under perturbations. Remaining in the setting of \cite{huynh:poetzsche:kloeden:20} our criteria are formulated for abstract difference equations in metric spaces. We believe that they apply to a wide range of evolutionary equations and their spatial discretizations. Nevertheless, although convergence under perturbation is proven, at this level of generality we are not able to establish a particular convergence rate without imposing further assumptions. 
\subsection{Integrodifference equations and spatial discretization}
As concrete application serve nonautonomous integrodifference equations (short IDEs)
\begin{equation}
	\tag{$I_0$}
	u_{t+1}=\int_\Omega f_t(\cdot,y,u_t(y))\,{\mathrm d} y
	\label{ide0}
\end{equation}
(see Sect.~\ref{sec3}). Such recursions in infinite-dimensional spaces originally stem from population genetics but gained popularity as models for the dispersal of species in theoretical ecology over the last decades \cite{kot:schaeffer:86,lutscher:19}. Recently also nonautonomous models were studied in \cite{bouhours:lewis:16,jacobsen:jin:lewis:15}, which clearly motivates our general approach. 

The several approaches to spatially discretize IDEs are based on related techniques for nonlinear integral equations \cite{atkinson:92}. Among them, in \sref{sec3} we focus on collocation methods approximating continuous functions on a compact habitat. As concrete collocation spaces we exemplify splines of different order. In this framework we provide conditions that pullback and forward attractors, as well as forward limit sets persist under collocation, and establish convergence of pullback attractors and forward limit sets for increasingly more accurate discretizations. Convergence rates could not be derived using our proof techniques, but we experimentally demonstrate in \sref{sec4} that quadratic convergence under piecewise linear collocation is preserved. We close with a construction of forward limit sets for asymptotically autonomous spatial Ricker equations. 
\subsection{Notation}
Let $\R_+:=[0,\infty)$. A \emph{discrete interval} $\I$ is the intersection of a real interval with the integers $\Z$, $\I':=\set{t\in\I:\,t+1\in\I}$ and in particular
\begin{align*}
	\I_\tau^+&:=\set{t\in\I:\,\tau\leq t},&
	\I_\tau^-&:=\set{t\in\I:\,t\leq\tau}\quad\text{for }\tau\in\Z,
\end{align*}
as well as $\N_0:=\set{0,1,2,\ldots}$. 

On a given metric space $(X,d)$, $\overline{A}$ denotes the closure of a subset $A\subseteq X$, $B_r(x):=\set{y\in X:\,d(x,y)<r}$ is the open ball with center $x\in X$ and radius $r>0$, and $\bar B_r(x)$ its closure. We write 
$
\mbox{dist}_A(x):=\inf_{a\in A}d(x,a)
$
for the \emph{distance} of a point $x\in X$ to a subset $A\subseteq X$ and
$
B_r(A):=\set{x\in X:\,\mbox{dist}_A(x)<r}
$
for its $r$-neighborhood. The \emph{Hausdorff semidistance} of bounded and closed subsets $A,B\subseteq X$ is defined as
$$
	\dist{A,B}:=\sup_{a\in A}\mbox{dist}_{B}(a),
$$
and, with a further closed and bounded subset $C\subseteq X$, one has the properties
\begin{align}
	\dist{A,B}\leq\dist{A,C}+\dist{C,B}. 
	\label{hs2}
\end{align}

A mapping $\eF:X\to X$ is called \emph{bounded}, if it maps bounded subsets of $X$ into bounded sets and \emph{globally bounded}, if $\eF(X)$ is bounded. A \emph{completely continuous} map $\eF$ is continuous and maps bounded sets into relatively compact sets.

Beyond this general terminology, we need to introduce terms commonly used in the area of nonautonomous dynamics \cite{kloeden:rasmussen:10}: 
A subset $\cA\subseteq\I\tm X$ with nonempty \emph{fibers} $\cA(t):=\{x\in X:\,(t,x)\in\cA\}$, $t\in\I$, is called \emph{nonautonomous set}. A nonautonomous set $\cA$ is said to have some topological property if each fiber $\cA(t)$, $t\in\I$, has this property. Furthermore, one speaks of a \emph{bounded} nonautonomous set $\cA$, if there exists real $R>0$ and a point $x_0\in X$ such that $\cA(t)\subseteq B_R(x_0)$ holds for all $t\in\I$. If $\I$ is unbounded above, then we define
$$
	\limsup_{t\to\infty}\cA(t):=\bigcap_{\tau\in\I}\overline{\bigcup_{\tau\leq t}\cA(t)}.
$$
\section{Nonautonomous difference equations and attractors}
\label{sec2}
Let $(X,d)$ be a complete metric space and $U_t\neq\emptyset$, $t\in\I'$, be closed subsets of $X$. We consider nonautonomous difference equations 
\begin{align} \label{deq}
	\tag{$\Delta$}
	\boxed{u_{t+1}=\eF_t(u_t)} 
\end{align}
having continuous right-hand sides $\eF_t:U_t\to X$, $t\in\I'$.

Given an \emph{initial time} $\tau\in\I$, a \emph{forward solution} $(\phi_t)_{\tau\leq t}$ to \eqref{deq} is a sequence satisfying $\phi_t\in U_t$ and $\phi_{t+1}=\eF_t(\phi_t)$ for all $\tau\leq t,t\in\I'$, whilst an \emph{entire solution} $(\phi_t)_{t\in\I}$ meets the same identity for all $t\in\I'$. The \emph{general solution} $\vphi(\cdot;\tau,u_\tau)$ of \eqref{deq} is the unique forward solution starting at $\tau\in\I$ in the \emph{initial state} $u_\tau\in U_\tau$, i.e.\ 
\begin{align} 
	\vphi(t;\tau,u_\tau):=
	\begin{cases}
		\eF_{t-1}\circ\ldots\circ\eF_\tau(u_\tau),&\tau<t,\\
		u_\tau,&\tau=t,
	\end{cases}
	\label{process}
\end{align}
as long as the iterates stay in $U_t$. 

From now on, assume that $\eF_t(U_t)\subseteq U_{t+1}$ holds for all $t\in\I'$ and abbreviate 
$$
	\cU:=\set{(t,u)\in\I\tm X:\,u\in U_t}.
$$
Then $\vphi$ satisfies the \emph{process} (or \emph{two-parameter semi-group}) \emph{property}
$$
	\vphi(t;s,\vphi(s;\tau,u))=\vphi(t;\tau,u)\fall\tau\leq s\leq t,\,u\in U_\tau.
$$

A nonautonomous set $\cA\subseteq\cU$ is said to be \emph{positively} or \emph{negatively invariant}, if
\begin{align*}
\eF_t\intoo{\cA(t)}&\subseteq\cA(t+1),&
\cA(t+1)&\subseteq\eF_t\intoo{\cA(t)}\fall t\in\I'
\end{align*}
holds, respectively. Accordingly, an \emph{invariant} set $\cA$ is both positively and negatively invariant, that is $\eF_t\intoo{\cA(t)}=\cA(t+1)$ for all $t\in\I'$.

A bounded nonautonomous set $\cA\subseteq\cU$ is called \emph{absorbing}, if for each $\tau\in\I$ and bounded $\cB\subseteq\cU$, there is an \emph{absorption time} $T=T(\tau,\cB)\in\N$ such that 
$$
	\vphi(t;\tau,\cB(\tau))\subseteq\cA(t)\fall t,\tau\in\I,\,t-\tau\geq T.
$$
In addition, for \emph{pullback} and \emph{forward absorbing sets}, the present time $t$ and the initial time $\tau$ work similarly to those in the properties of pullback and forward attraction (one notion of time is fixed and the other tends to infinity or minus infinity, cf.~\cite[pp.~44--45, Def.~3.3]{kloeden:rasmussen:10}). 
\subsection{Pullback attractors}
Let a discrete interval $\I$ be unbounded below. A \emph{pullback attractor} $\cA^\ast\subseteq\cU$ of \eqref{deq} is a compact and invariant nonautonomous set, which pullback attracts bounded sets $\cB\subseteq\cU$, i.e.\
\begin{align*}
	\lim_{s\to\infty} \dist{\vphi(\tau;\tau-s,\cB(\tau-s)),\cA^\ast(\tau)}=0 \fall\tau\in\I.
\end{align*}
With comprehensive information on pullback attractors given in \cite{bortolan:carvalho:langa:20,carvalho:langa:robinson:10,kloeden:rasmussen:10,poetzsche:10}, our existence condition for pullback attractors is based on the following notion: A difference equation \eqref{deq} is called \emph{pullback asymptotically compact}, if for every $\tau\in\I$, every sequence $(s_k)_{k\in\N}$ in $\N$ with $s_k\to\infty$ as $k\to\infty$ and every bounded sequence $(a_k)_{k\in\N}$ with $a_k\in\cU(\tau-s_k)$, the sequence $(\vphi(\tau;\tau-s_k,a_k))_{k\in\N}$ in $\cU(\tau)$ has a convergent subsequence. 
\begin{thm}[Existence of pullback attractors, {cf.~\cite[p.~19, Thm.~1.3.9]{poetzsche:10}}]
	\label{thmpbatt}
	Suppose a difference equation \eqref{deq} has a pullback absorbing set $\cA$. If \eqref{deq} is pullback asymptotically compact, then it possesses a unique pullback attractor $\cA^\ast\subseteq\cA$. 
\end{thm}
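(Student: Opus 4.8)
The plan is to construct the candidate pullback attractor fibrewise via pullback omega-limit sets of the absorbing set $\cA$, and then verify compactness, invariance, pullback attraction, and uniqueness in turn. For each $\tau\in\I$ define
\begin{align*}
	\cA^\ast(\tau):=\bigcap_{s_0\in\N}\overline{\bigcup_{s\geq s_0}\vphi(\tau;\tau-s,\cA(\tau-s))}.
\end{align*}
First I would show each fiber $\cA^\ast(\tau)$ is nonempty and compact: since $\cA$ is absorbing it is in particular a bounded nonautonomous set, so $\bigcup_{s\geq s_0}\vphi(\tau;\tau-s,\cA(\tau-s))$ is bounded, and pullback asymptotic compactness guarantees that any sequence $\vphi(\tau;\tau-s_k,a_k)$ with $s_k\to\infty$ and $a_k\in\cA(\tau-s_k)\subseteq\cU(\tau-s_k)$ has a convergent subsequence, whose limit lies in $\cA^\ast(\tau)$ by the standard nested-intersection argument; the same compactness yields that $\cA^\ast(\tau)$ is closed and precompact, hence compact in the complete space $X$. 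Moreover $\cA^\ast(\tau)\subseteq\overline{\cA(\tau)}=\cA(\tau)$ once $s_0$ is taken past the absorption time, so $\cA^\ast\subseteq\cA\subseteq\cU$.

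Next I would establish invariance. Positive invariance $\eF_\tau(\cA^\ast(\tau))\subseteq\cA^\ast(\tau+1)$ follows from continuity of $\eF_\tau$ together with the cocycle (process) property $\vphi(\tau+1;\tau-s,\cdot)=\eF_\tau\circ\vphi(\tau;\tau-s,\cdot)$: a point of $\cA^\ast(\tau)$ is a limit $\lim_k\vphi(\tau;\tau-s_k,a_k)$, and applying $\eF_\tau$ and using continuity produces a point of $\cA^\ast(\tau+1)$. For negative invariance $\cA^\ast(\tau+1)\subseteq\eF_\tau(\cA^\ast(\tau))$, take $b\in\cA^\ast(\tau+1)$, write $b=\lim_k\vphi(\tau+1;\tau+1-s_k,a_k)=\lim_k\eF_\tau(\vphi(\tau;\tau+1-s_k,a_k))$; the points $c_k:=\vphi(\tau;\tau+1-s_k,a_k)=\vphi(\tau;\tau-(s_k-1),a_k)$ form a bounded sequence with $s_k-1\to\infty$, so by pullback asymptotic compactness a subsequence converges to some $c\in\cA^\ast(\tau)$, and continuity of $\eF_\tau$ gives $b=\eF_\tau(c)$.

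For pullback attraction, suppose for contradiction that $\dist{\vphi(\tau;\tau-s,\cB(\tau-s)),\cA^\ast(\tau)}\not\to0$ for some bounded $\cB$ and some $\tau$; then there are $\eps>0$, $s_k\to\infty$, and $b_k\in\cB(\tau-s_k)$ with $\dist{\vphi(\tau;\tau-s_k,b_k)}{\cA^\ast(\tau)}\geq\eps$. Using the absorption time $T=T(\tau,\cB)$, for large $k$ we have $x_k:=\vphi(\tau-T;\tau-s_k,b_k)\in\cA(\tau-T)$, and by the process property $\vphi(\tau;\tau-s_k,b_k)=\vphi(\tau;\tau-T,x_k)=\vphi(\tau;(\tau-T)-(s_k-T),x_k)$; pullback asymptotic compactness then extracts a subsequence converging to a point of $\cA^\ast(\tau)$ (by construction of the limit set), contradicting the $\eps$-separation. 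Finally, uniqueness follows from the standard argument: any two pullback attractors are compact invariant and pullback-attract bounded sets, so each pullback attracts the other; invariance lets one run the attraction estimate along a fixed fiber and the triangle inequality \eqref{hs2} forces $\dist{\cA_1^\ast(\tau),\cA_2^\ast(\tau)}=\dist{\cA_2^\ast(\tau),\cA_1^\ast(\tau)}=0$.

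I expect the main obstacle to be the negative invariance step, since it is the only place where one must combine pullback asymptotic compactness with the process property in a ``backward'' direction and carefully track that the shifted times $s_k-1$ still go to infinity and that the extracted subsequential limit indeed lands in $\cA^\ast(\tau)$ rather than merely in $\cA(\tau)$; everything else is a routine adaptation of the classical pullback-attractor existence proof.
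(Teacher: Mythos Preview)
The paper does not give its own proof of this theorem; it is simply quoted from \cite[p.~19, Thm.~1.3.9]{poetzsche:10}. Your construction via the fibrewise pullback $\omega$-limit sets of the absorbing set is the classical one and is essentially what appears in that reference, so the overall strategy is fine.

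One step does need repair, however. In the pullback attraction argument you write
\[
\vphi(\tau;\tau-s_k,b_k)=\vphi(\tau;\tau-T,x_k)=\vphi(\tau;(\tau-T)-(s_k-T),x_k)
\]
and then invoke pullback asymptotic compactness. The last equality is false (the right-hand side is $\vphi(\tau;\tau-s_k,x_k)$, which is not the same object), and in any case the time gap between $\tau-T$ and $\tau$ is the \emph{fixed} number $T$, so pullback asymptotic compactness does not apply to the sequence $\vphi(\tau;\tau-T,x_k)$. The clean way to run this step is: apply pullback asymptotic compactness directly to $\vphi(\tau;\tau-s_k,b_k)$ with the bounded $b_k\in\cB(\tau-s_k)$ and $s_k\to\infty$ to extract a convergent subsequence with limit $y$; then, for each fixed $s_0\in\N$, absorption gives $\vphi(\tau-s_0;\tau-s_k,b_k)\in\cA(\tau-s_0)$ for large $k$, so $\vphi(\tau;\tau-s_k,b_k)\in\vphi(\tau;\tau-s_0,\cA(\tau-s_0))$, hence $y\in\overline{\bigcup_{s\geq s_0}\vphi(\tau;\tau-s,\cA(\tau-s))}$ for every $s_0$, i.e.\ $y\in\cA^\ast(\tau)$, contradicting the $\eps$-separation. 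With this correction your proof goes through.
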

The importance of pullback attractors is due to their following dynamical characterization (see \cite[p.~17, Cor.~1.3.4]{poetzsche:10})
\begin{equation} 
	\cA^\ast
	=
	\set{(t,u)\in\cU:
	\begin{array}{l}
		\text{there exists a bounded entire}\\
		\text{solution $\phi$ to \eqref{deq} with }\phi_\tau=u
	\end{array}}
	\label{besol}
\end{equation}
motivating that pullback attractors are a suitable nonautonomous extension of global attractors \cite[p.~20, Thm.~1.9]{bortolan:carvalho:langa:20} of autonomous difference equations. 
\subsection{Forward dynamics}
On a discrete interval $\I$ unbounded above, we assume the right-hand sides $\eF_t:U\to U$, $t\in\I$, of \eqref{deq} act on and map into a common closed subset $U\neq\emptyset$ of $X$. A \emph{forward attractor} $\cA^+\subseteq\cU$ of \eqref{deq} is a compact and invariant nonautonomous set, which forward attracts bounded sets $\cB\subseteq\cU$, i.e.\
\begin{align*}
	\lim_{s\to\infty} \dist{\vphi(\tau+s;\tau,\cB(\tau)),\cA^+(\tau+s)}=0 \fall\tau\in\I.
\end{align*}
In contrast to pullback attractors $\cA^\ast$, forward attractors $\cA^+$ need not to be unique (cf.\ \cite[Sect.~4]{kloeden:yang:16}) and further properties are given in \cite{cui:kloeden:yang:19,huynh:poetzsche:kloeden:20,kloeden:lorenz:16,kloeden:rasmussen:10,kloeden:yang:16}. Not surprisingly, the existence of forward attractors is based on suitable compactness properties. For nonautonomous sets $\cA\subseteq\cU$, a difference equation \eqref{deq} is called
\begin{itemize}
	\item \emph{$\cA$-asymptotically compact}, if there exists a nonempty, compact set $K\subseteq U$ such that $K$ forward attracts $\cA(\tau)$, i.e.,\
	\begin{align*}
		\lim_{s\to\infty}\dist{\vphi(\tau+s;\tau,\cA(\tau)),K}=0 \fall\tau\in\I, 
	\end{align*}

	\item \emph{strongly $\cA$-asymptotically compact}, if there exists a nonempty, compact set $K\subseteq U$ such that every sequence $\intoo{(s_k,\tau_k)}_{k\in\N}$ in $\N\tm\I$ with $s_k\to\infty$, $\tau_k\to\infty$ as $k\to\infty$ yields
	$$
		\lim_{k\to\infty} \dist{\vphi(\tau_k+s_k;\tau_k,\cA(\tau_k)), K}=0. 
	$$
\end{itemize}
For positively invariant sets $\cA$, strong $\cA$-asymptotic compactness implies $\cA$-asymp\-totic compactness (cf.~\cite[Rem.~4.1]{huynh:poetzsche:kloeden:20}). 

In preparation of our next results, let us introduce a set $\Omega_{\cA}\subseteq\cU$ as
$$
	\Omega_{\cA}(\tau):=\bigcap_{0\leq s}\overline{\bigcup_{s\leq t}\vphi(\tau+t;\tau,\cA(\tau))}
	\fall\tau\in\I
$$
and the \emph{forward limit set} for the dynamics of \eqref{deq} starting from $\Omega_{\cA}$ is 
$$
	\omega_{\cA}^+:=\overline{\bigcup_{\tau\in\I}\Omega_{\cA}(\tau)}\subseteq U.
$$
\begin{thm}[Forward limit sets, {cf.~\cite[Thm.~4.6]{huynh:poetzsche:kloeden:20}}]
	\label{thmfwlim}
	Suppose a difference equation \eqref{deq} has a bounded and positively invariant set $\cA$. If \eqref{deq} is $\cA$-asymptotically compact with a compact $K\subseteq U$, then its forward limit set $\omega_\cA^+\subseteq K$ is nonempty, compact and forward attracts $\cA$. 
\end{thm}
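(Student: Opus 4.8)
The plan is to establish the three assertions—nonemptiness, compactness, and forward attraction—by exploiting the positive invariance of $\cA$ together with the $\cA$-asymptotic compactness hypothesis, which provides a compact set $K\subseteq U$ that forward attracts every fiber $\cA(\tau)$. First I would show that $\Omega_\cA(\tau)\subseteq K$ for each $\tau\in\I$: since $K$ forward attracts $\cA(\tau)$, for every $\eps>0$ the tail $\bigcup_{s\leq t}\vphi(\tau+t;\tau,\cA(\tau))$ lies in the closed $\eps$-neighborhood $\overline{B_\eps(K)}$ once $s$ is large, so each set in the defining intersection is eventually contained in $\overline{B_\eps(K)}$, whence $\Omega_\cA(\tau)\subseteq\bigcap_{\eps>0}\overline{B_\eps(K)}=K$ by compactness (hence closedness) of $K$. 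Taking the union over $\tau$ and closing gives $\omega_\cA^+\subseteq K$, and since $\omega_\cA^+$ is by construction a closed subset of the compact set $K$, it is compact.

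For nonemptiness, fix any $\tau\in\I$ and any $a\in\cA(\tau)$. The orbit $\intoo{\vphi(\tau+t;\tau,a)}_{t\geq 0}$ has all but finitely many terms in the compact set $\overline{B_1(K)}$ (by the attraction property with $\eps=1$), so it has a convergent subsequence; its limit point lies in $\Omega_\cA(\tau)$, since a subsequential limit of the orbit belongs to every tail closure $\overline{\bigcup_{s\leq t}\vphi(\tau+t;\tau,\cA(\tau))}$. Hence $\Omega_\cA(\tau)\neq\emptyset$ and therefore $\omega_\cA^+\neq\emptyset$.

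The substantive part is forward attraction of $\cA$, i.e.\ $\lim_{s\to\infty}\dist{\vphi(\tau+s;\tau,\cA(\tau)),\omega_\cA^+}=0$ for each $\tau\in\I$. I would argue by contradiction: if this fails for some $\tau$, there exist $\eps>0$, a sequence $s_k\to\infty$, and points $a_k\in\cA(\tau)$ with $\mbox{dist}_{\omega_\cA^+}\intoo{\vphi(\tau+s_k;\tau,a_k)}\geq\eps$. Using the $\cA$-asymptotic compactness applied to $\cA(\tau)$, the sequence $\intoo{\vphi(\tau+s_k;\tau,a_k)}_{k\in\N}$ eventually lies in a fixed compact neighborhood of $K$, so after passing to a subsequence it converges to some $u_\ast\in U$ with $\mbox{dist}_{\omega_\cA^+}(u_\ast)\geq\eps$, so $u_\ast\notin\omega_\cA^+$. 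The goal is then to show $u_\ast\in\Omega_\cA(\tau)\subseteq\omega_\cA^+$, a contradiction. For this, observe that by positive invariance $\vphi(\tau+s_k;\tau,a_k)\in\vphi(\tau+s_k;\tau,\cA(\tau))$, and one must verify that the subsequential limit $u_\ast$ lies in the tail closure $\overline{\bigcup_{s\leq t}\vphi(\tau+t;\tau,\cA(\tau))}$ for every $s\geq 0$: since $s_k\to\infty$, all sufficiently large indices satisfy $s_k\geq s$, so $\vphi(\tau+s_k;\tau,a_k)\in\bigcup_{s\leq t}\vphi(\tau+t;\tau,\cA(\tau))$ for those $k$, and passing to the limit puts $u_\ast$ in the closure of that set. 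As $s\geq 0$ was arbitrary, $u_\ast\in\Omega_\cA(\tau)\subseteq\omega_\cA^+$, the desired contradiction.

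The main obstacle I anticipate is purely bookkeeping rather than conceptual: one must be careful that the $\cA$-asymptotic compactness hypothesis, as stated, gives attraction of each fiber $\cA(\tau)$ to $K$ (not a subsequential-extraction statement like the \emph{strong} version), so to extract convergent subsequences of $\intoo{\vphi(\tau+s_k;\tau,a_k)}$ one first uses the $\eps$-attraction to confine the sequence to the compact set $\overline{B_\eps(K)}$ and only then applies compactness of that neighborhood; this is exactly why the fiber $\tau$ is held fixed throughout. Everything else is a routine manipulation of nested tail closures and Hausdorff semidistances, using property \eqref{hs2} where convenient.
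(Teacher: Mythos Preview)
The paper does not actually supply a proof of this theorem; it is quoted from \cite[Thm.~4.6]{huynh:poetzsche:kloeden:20} and left unproved here, so there is no in-paper argument to compare against. Your overall architecture---show $\Omega_\cA(\tau)\subseteq K$, deduce compactness of $\omega_\cA^+$ as a closed subset of $K$, get nonemptiness from a subsequential orbit limit, and obtain forward attraction by contradiction via a tail-closure argument---is the standard one and is essentially correct.

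There is, however, a genuine gap in how you extract convergent subsequences in both the nonemptiness and the forward-attraction steps. You claim that $\overline{B_1(K)}$ (respectively ``a fixed compact neighborhood of $K$'') is compact and use this to pass to a convergent subsequence. In a general complete metric space---and in particular in the infinite-dimensional space $C_d$ that is the paper's intended application---closed $\eps$-neighborhoods of compact sets are \emph{not} compact (take $K=\{0\}$ in any infinite-dimensional Banach space: $\overline{B_1(0)}$ is the closed unit ball). Since you flag precisely this step as the ``main obstacle'' and then resolve it by asserting compactness of $\overline{B_\eps(K)}$, the argument as written does not go through.

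The repair is easy and uses only the compactness of $K$ itself. From $\mbox{dist}_K\bigl(\vphi(\tau+s_k;\tau,a_k)\bigr)\to 0$ (which follows from $\cA$-asymptotic compactness applied to the fixed fiber $\cA(\tau)$), choose for each $k$ a point $y_k\in K$ realizing the distance, so that $d\bigl(\vphi(\tau+s_k;\tau,a_k),y_k\bigr)\to 0$. The sequence $(y_k)_{k\in\N}$ lies in the compact set $K$ and hence has a convergent subsequence $y_{k_j}\to u_\ast\in K$; the triangle inequality then gives $\vphi(\tau+s_{k_j};\tau,a_{k_j})\to u_\ast$. From this point your tail-closure argument showing $u_\ast\in\Omega_\cA(\tau)\subseteq\omega_\cA^+$ is correct and yields the desired contradiction. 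The identical modification handles the nonemptiness step.
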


In comparison to \tref{thmpbatt} a result guaranteeing the existence of forward attractors is more involved. It requires the assumption $\I=\Z$ and to introduce a further set $\cA^\star\subseteq\cU$ by its fibers
$$
	\cA^\star(\tau):=\bigcap_{0\leq s}\vphi(\tau;\tau-s,\cA(\tau-s))\fall\tau\in\Z.
$$
\begin{thm}[Forward attractors] 
	\label{thmfwatt}
	Let $\I=\Z$. 
	Suppose a difference equation \eqref{deq} has a closed, forward absorbing and positively invariant set $\cA$. If \eqref{deq} is pullback asymptotically compact, strongly $\cA$-asymp\-to\-tically compact with compact $K\subseteq U$ and
	\begin{itemize}
		\item[(i)] for every sequence $\intoo{(s_k,\tau_k)}_{k\in\N}$ in $\N\tm\Z$ with $\tau_k\to\infty$ as $k\to\infty$ one has
		\begin{align*}
			\lim_{k\to\infty}\dist{\vphi(\tau_k+s_k;\tau_k,K),K}=0,
		\end{align*}

		\item[(ii)] for all $\eps>0$ and $S\in\N$, there exists a $\delta>0$ such that for all $\tau\in\Z$ one has the implication
		\begin{align*}
			\left.
			\begin{array}{c}
				u,v\in\cA(\tau)\cup K,\\
				d(u,v)<\delta
			\end{array}
			\right\}
			\Rightarrow
			\max_{0<s\leq S}d(\vphi(\tau+s;\tau,u),\vphi(\tau+s;\tau,v))<\eps,
		\end{align*}

		\item[(iii)] $\omega_{\cA}^+=\limsup_{t\to\infty}\cA^\star(t)$, i.e.\ the forward limit sets for the dynamics starting from $\Omega_\cA$ and from within $\cA^\star$ coincide
\end{itemize}
	hold, then $\cA^\star\subseteq\cA$ is a forward attractor of \eqref{deq}.
\end{thm}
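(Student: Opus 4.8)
The plan is to check the three defining properties of a forward attractor for $\cA^\star$ — that it is a compact, invariant nonautonomous set which forward attracts bounded sets — together with the inclusion $\cA^\star\subseteq\cA$. The last point and well-definedness are immediate: an absorbing set is bounded, and as $\I=\Z$ every fiber $\cA^\star(\tau)$ is defined; positive invariance of $\cA$ makes the sets $V_s(\tau):=\vphi(\tau;\tau-s,\cA(\tau-s))$, $s\in\N_0$, a nested decreasing family inside the bounded fiber $\cA(\tau)$, so $\cA^\star(\tau)=\bigcap_{s\geq0}V_s(\tau)\subseteq\cA(\tau)$ and hence $\cA^\star\subseteq\cA\subseteq\cU$.

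First I would show that closures are irrelevant, $\cA^\star(\tau)=\bigcap_{s\geq0}\overline{V_s(\tau)}$: ``$\subseteq$'' is clear, and for ``$\supseteq$'' I fix $s$, approximate a given element of the right-hand side by points of $V_{s+m}(\tau)$, lift these to preimages in $\cA(\tau-s-m)\subseteq\cU(\tau-s-m)$, let $m\to\infty$, and apply pullback asymptotic compactness together with continuity of the finite composition $\vphi(\tau;\tau-s,\cdot)$ and closedness of $\cA(\tau-s)$. Consequently each $\cA^\star(\tau)$ is closed; it is relatively compact because any sequence in it is approximated by points $\vphi(\tau;\tau-k,a_k)$ with $a_k\in\cA(\tau-k)$, to which pullback asymptotic compactness applies; and it is nonempty, by extracting a convergent subsequence from a sequence $v_k\in V_k(\tau)$ and using nestedness. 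Invariance comes next: $\eF_t(\cA^\star(t))\subseteq\cA^\star(t+1)$ follows from the process property via the index shift $s\mapsto s+1$ (the case $s=0$ being covered by positive invariance of $\cA$), and $\cA^\star(t+1)\subseteq\eF_t(\cA^\star(t))$ by producing for $v\in\cA^\star(t+1)$ and each $s\geq1$ a preimage $u_s\in V_{s-1}(t)$ with $\eF_t(u_s)=v$, extracting $u_{s_k}\to u^\ast\in\cA^\star(t)$ via pullback asymptotic compactness (using again $\cA^\star(t)=\bigcap_m\overline{V_m(t)}$), and passing to the limit by continuity of $\eF_t$. Thus $\cA^\star$ is compact and invariant, and in particular $\cA^\star(t)=\vphi(t;\sigma,\cA^\star(\sigma))\subseteq\vphi(t;\sigma,\cA(\sigma))$ whenever $\sigma\leq t$.

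For the forward attraction I would first note that, $\cA$ being positively invariant, strong $\cA$-asymptotic compactness entails $\cA$-asymptotic compactness, so \tref{thmfwlim} yields that $\omega_\cA^+\subseteq U$ is nonempty, compact and forward attracts $\cA$; the inclusion above also gives $\dist{\cA^\star(t),\omega_\cA^+}\to0$ as $t\to\infty$. Given a bounded $\cB\subseteq\cU$ and $\tau\in\I$, forward absorption provides $T$ with $\vphi(\tau+T;\tau,\cB(\tau))\subseteq\cA(\tau+T)$, hence $\vphi(\tau+s;\tau,\cB(\tau))\subseteq\vphi(\tau+s;\tau+T,\cA(\tau+T))$ for $s\geq T$; invoking \eqref{hs2} and the forward attraction of $\cA$ by $\omega_\cA^+$, everything reduces to proving that $\dist{\omega_\cA^+,\cA^\star(t)}\to0$ as $t\to\infty$.

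This last convergence is the main obstacle. If it failed, compactness of $\omega_\cA^+$ would give $\eps_0>0$, $t_k\to\infty$ and $w_k\to w^\ast\in\omega_\cA^+$ with $\mbox{dist}_{\cA^\star(t_k)}(w^\ast)\geq\eps_0/2$ eventually, while hypothesis (iii) would supply $r_j\to\infty$ and $z_j\in\cA^\star(r_j)$ with $z_j\to w^\ast$. Using the invariance of $\cA^\star$ to write $\cA^\star(t_k)=\vphi(t_k;r_j,\cA^\star(r_j))$ for the witnesses with $r_j\leq t_k$, one must show that these fibers return close to $w^\ast$ precisely along the prescribed times $t_k$ — effectively that $\limsup_{t\to\infty}\cA^\star(t)\subseteq\liminf_{t\to\infty}\cA^\star(t)$. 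This is exactly what hypotheses (i) and (ii) are designed for: (ii) provides equicontinuity of the transition maps over bounded time horizons \emph{uniformly in the (large) base time}, (i) confines the relevant orbit segments to a fixed neighbourhood of the compact set $K$, and a finite-horizon chaining argument — with strong $\cA$-asymptotic compactness controlling the remaining tail — then produces points of $\cA^\star(t_k)$ converging to $w^\ast$, the desired contradiction. I expect this chaining step, reconciling the uniformity furnished by (i)--(ii) with the time dependence of \eqref{deq}, to be the technical heart of the proof. Once it is in place, $\cA^\star$ is a compact, invariant nonautonomous set contained in $\cA$ that forward attracts every bounded subset of $\cU$, i.e.\ a forward attractor of \eqref{deq}.
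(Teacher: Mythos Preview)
The paper's own proof is a two-line citation: nonemptiness, compactness and invariance of $\cA^\star\subseteq\cA$ are taken from \cite[Prop.~3.1]{huynh:poetzsche:kloeden:20}, and forward attraction from \cite[Cor.~4.19]{huynh:poetzsche:kloeden:20}. Your proposal effectively reconstructs the first of these citations from scratch and correctly reduces the second to the convergence $\dist{\omega_\cA^+,\cA^\star(t)}\to 0$, but stops short of actually proving that convergence.

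Your treatment of the structural properties is sound. The argument that closures are redundant in $\cA^\star(\tau)=\bigcap_s V_s(\tau)$ (lifting approximants to preimages and invoking pullback asymptotic compactness at the intermediate time $\tau-s$), the nonemptiness and compactness of the fibers, and the two-sided invariance proof all go through as written; this is essentially the content one would expect in the cited Proposition~3.1. The reduction of forward attraction --- absorb $\cB(\tau)$ into $\cA(\tau+T)$, invoke \tref{thmfwlim} to get forward attraction of $\cA$ by $\omega_\cA^+$, then triangulate via \eqref{hs2} --- is also valid.

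The gap is exactly where you flag it. You need $\limsup_{t\to\infty}\cA^\star(t)\subseteq\liminf_{t\to\infty}\cA^\star(t)$, and your paragraph on the ``chaining argument'' is a statement of intent rather than a proof: you assert that (i) confines orbit segments near $K$, that (ii) supplies uniform equicontinuity over bounded horizons, and that strong $\cA$-asymptotic compactness handles tails, but you do not assemble these into the required estimate. Concretely, given $w^\ast\in\omega_\cA^+$, witnesses $z_j\in\cA^\star(r_j)$ with $z_j\to w^\ast$ from (iii), and the target times $t_k\to\infty$, invariance yields $\vphi(t_k;r_j,z_j)\in\cA^\star(t_k)$, but this iterate is a priori uncontrolled over the possibly unbounded gap $t_k-r_j$; showing it (or some other point of $\cA^\star(t_k)$) stays near $w^\ast$ is precisely the nontrivial work packaged into the cited Corollary~4.19. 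You have correctly identified the ingredients and the obstacle, but until that step is written out in full the forward-attraction claim remains unproved.
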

\begin{rem}
	(1) A situation, where an assumption such as (i) trivially holds, is when the compact set $K\subseteq U$ is positively invariant w.r.t.\ \eqref{deq}. 

	(2) The assumption (ii) can be interpreted as uniform continuity of every mapping $\vphi(\tau+s;\tau,\cdot)$ on $\cA(\tau)\cup K$ and finite discrete intervals $[\tau,\tau+s]\cap\Z$, $\tau\in\I$, $0<s\leq S$. Hence, if we assume that there exists a bounded $B\subseteq U$ satisfying
	$$
		\bigcup_{s\in\N_0}\intoo{\cA(\tau+s)\cup\vphi(\tau+s;\tau,K)}\subseteq B\fall\tau\in\Z, 
	$$
	and a Lipschitz condition with uniform constant $\ell\geq 0$ for each $\eF_t:U\to U$, $t\in\I$, on $B$, then
	$d(\vphi(\tau+s;\tau,u),\vphi(\tau+s;\tau,v))\leq\ell^sd(u,v)$ for all $u,v\in B$. From this we obtain that (ii) holds true. 
\end{rem}
\begin{proof}
	Invariance, being nonempty and compactness of $\cA^\star\subseteq\cA$ are established in \cite[Prop.~3.1]{huynh:poetzsche:kloeden:20}, while the forward attraction is due to \cite[Cor.~4.19]{huynh:poetzsche:kloeden:20}.
\end{proof}
\subsection{Perturbation of attractors}
Rather than sticking to a single problem \eqref{deq}, we now proceed to families of nonautonomous difference equations
\begin{align}
	\boxed{u_{t+1}=\eF_t^n(u_t)}
	\tag{$\Delta_n$}
	\label{deqn}
\end{align}
having continuous right-hand sides $\eF_t^n:U_t\to X$, $t\in\I'$, which are parameterized by $n\in\N_0$. The general solution of \eqref{deqn} is denoted by $\vphi^n$.

For $n\in\N$ the difference equations \eqref{deqn} may describe perturbations (concretely, numerical discretizations with accuracy increasing in $n$) of an initial problem $(\Delta_0)$. Hence, a crucial concept allowing us to compare the behavior of solutions to \eqref{deqn}, $n\in\N$, to the solutions of $(\Delta_0)$ is the \emph{local error}
$$
	e_t^n(u):=d(\eF_t^n(u),\eF_t(u)).
$$
We say the difference equations \eqref{deqn}, $n\in\N$, are \emph{convergent}, if there exists a \emph{convergence function} $\Gamma:\R_+\to\R_+$ satisfying $\lim_{\rho\searrow 0}\Gamma(\rho)=0$ such that for every bounded $B\subseteq U_t$, there exists a $C(B)\geq 0$ with
\begin{align}
	\sup_{u\in B}e_t^n(u)\leq C(B)\Gamma(\tfrac{1}{n})\fall t\in\I',\,n\in\N.
	\label{errest}
\end{align}
One speaks of a \emph{convergence rate} $\mu>0$, if $\Gamma(\rho)=\rho^\mu$. 

The following result determines how the local error evolves over time. It is crucial in order to apply our perturbation results in \tref{thmuscpbatt} and \ref{thmusfw}: 
\begin{prop}[Global discretization error] \label{properr}
	Let $\tau,T\in\I$ with $\tau\leq T$ be fixed, $u\in U_\tau$ and $\cB\subseteq\cU$ be a bounded nonautonomous set such that
	\begin{align} \label{properr1}
		\vphi^0(t;\tau,u)\in\cB(t)\fall\tau\leq t\leq T.
	\end{align}
	If the difference equations \eqref{deqn}, $n\in\N$, are convergent and for every $t\in\I'$ and bounded $B\subseteq X$ there exists a real $\ell_t(B)\geq 0$ such that 
	\begin{align} \label{properror1}
		d(\eF_t^n(u),\eF_t^n(v))
		\leq
		\ell_t(B)d(u,v)\fall n\in\N,\,u,v\in U_t\cap B,
	\end{align}
	then for every $\rho>0$ there exists an $N_0\in\N$ such that for all $n\geq N_0$ one has the inclusion $\vphi^n(t;\tau,u)\in B_\rho(\cB(t))$ and the global discretization error 
	\begin{align} \label{properr3}
		d(\vphi^n(t;\tau,u),\vphi^0(t;\tau,u))
		\leq
		\Gamma\left(\tfrac{1}{n}\right)\sum_{l_1=\tau}^{t-1}C(\cB(l_1))\prod_{l_2=l_1+1}^{t-1}\ell_{l_2}\bigl(B_\rho(\cB(l_2))\bigr)
	\end{align}
	for all $\tau\leq t\leq T$. 
\end{prop}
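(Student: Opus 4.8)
The plan is a discrete Gronwall-type induction over the finite discrete interval $\intcc{\tau,T}\cap\Z$, establishing simultaneously that the perturbed trajectory $\vphi^n(t;\tau,u)$ remains inside the tube $B_\rho(\cB(t))$ and that the error estimate \eqref{properr3} holds.

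First I would fix, once and for all, the constant
$$
	M:=\max_{\tau\leq t\leq T}\;\sum_{l_1=\tau}^{t-1}C(\cB(l_1))\prod_{l_2=l_1+1}^{t-1}\ell_{l_2}\bigl(B_\rho(\cB(l_2))\bigr),
$$
which is well-defined and finite: $\intcc{\tau,T}\cap\Z$ is a finite set, each $B_\rho(\cB(l_2))$ is bounded (uniformly in $l_2$) because $\cB$ is a bounded nonautonomous set, so \eqref{errest} and \eqref{properror1} are applicable with $B=B_\rho(\cB(l_2))$, and all quantities entering $M$ depend only on the a priori data $\cB,\rho,\tau,T$ — crucially not on $n$ nor on the still unknown trajectory $\vphi^n(\cdot;\tau,u)$. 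Since $\Gamma(\rho)\to 0$ as $\rho\searrow 0$, one can then choose $N_0\in\N$ with $\Gamma(\tfrac1n)M<\rho$ for all $n\geq N_0$.

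Now fix $n\geq N_0$, abbreviate $E_t:=d(\vphi^n(t;\tau,u),\vphi^0(t;\tau,u))$, and prove by induction on $t\in\intcc{\tau,T}\cap\Z$ the combined assertion that \emph{(a)} $\vphi^n(t;\tau,u)$ is defined and lies in $B_\rho(\cB(t))$, and \emph{(b)} $E_t$ is bounded by the right-hand side of \eqref{properr3}. The base case $t=\tau$ is trivial, since $\vphi^n(\tau;\tau,u)=u=\vphi^0(\tau;\tau,u)\in\cB(\tau)$ by \eqref{properr1}, so $E_\tau=0$ and the sum in \eqref{properr3} is empty. For the inductive step, assuming \emph{(a)} and \emph{(b)} at some $t$ with $\tau\leq t<T$, I would insert the intermediate point $\eF_t^n(\vphi^0(t;\tau,u))$ and invoke the triangle inequality together with the process property \eqref{process}, obtaining
$$
	E_{t+1}\leq d\bigl(\eF_t^n(\vphi^n(t;\tau,u)),\eF_t^n(\vphi^0(t;\tau,u))\bigr)+e_t^n\bigl(\vphi^0(t;\tau,u)\bigr).
$$
The second summand is at most $C(\cB(t))\Gamma(\tfrac1n)$ by \eqref{errest}, because $\vphi^0(t;\tau,u)\in\cB(t)$. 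The first summand is at most $\ell_t\bigl(B_\rho(\cB(t))\bigr)E_t$ by \eqref{properror1}; this is legitimate since $\vphi^n(t;\tau,u)\in U_t\cap B_\rho(\cB(t))$ by \emph{(a)}, while $\vphi^0(t;\tau,u)\in\cB(t)\subseteq U_t\cap B_\rho(\cB(t))$ by \eqref{properr1} and $\cB\subseteq\cU$. Substituting the inductive bound \emph{(b)} for $E_t$, pulling the factor $\ell_t\bigl(B_\rho(\cB(t))\bigr)$ into the products (so each product now runs up to $l_2=t$) and noting that the newly appearing summand $C(\cB(t))$ is precisely the $l_1=t$ term with empty product, one arrives at exactly the bound \eqref{properr3} with $t$ replaced by $t+1$; this is \emph{(b)} at $t+1$. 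Finally $E_{t+1}\leq\Gamma(\tfrac1n)M<\rho$ by the choice of $N_0$, and since $\vphi^0(t+1;\tau,u)\in\cB(t+1)$ this forces $\vphi^n(t+1;\tau,u)\in B_\rho(\cB(t+1))$, which is \emph{(a)} at $t+1$, closing the induction.

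The argument is essentially mechanical; the only places needing attention are the index book-keeping when the fresh Lipschitz factor is absorbed into the sum–product expression, and the structural point — which I would emphasise in the write-up — that $M$, and hence $N_0$, must (and can) be fixed \emph{before} running the induction, independently of $n$ and of the perturbed solution, so that no circularity arises; finiteness of $\intcc{\tau,T}\cap\Z$ is exactly what makes $M<\infty$. I would also note in passing that claim \emph{(a)} is what guarantees $\vphi^n(\cdot;\tau,u)$ is genuinely defined up to time $T$ (recall the general solution is only defined as long as the iterates stay in the sets $U_t$), so the implicit well-definedness of $\vphi^n$ on $\intcc{\tau,T}$ is part of the conclusion rather than a hidden hypothesis.
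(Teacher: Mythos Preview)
Your proof is correct and follows essentially the same route as the paper's: an induction over the finite interval $[\tau,T]\cap\Z$ that simultaneously propagates the error estimate \eqref{properr3} and the tube inclusion $\vphi^n(t;\tau,u)\in B_\rho(\cB(t))$, with the inductive step driven by the triangle split $E_{t+1}\leq \ell_t(B_\rho(\cB(t)))E_t + e_t^n(\vphi^0(t;\tau,u))$. Your explicit choice of the uniform bound $M$ and of $N_0$ \emph{before} the induction is in fact a bit cleaner than the paper's phrasing (which defers the choice of $N_0$ to the end of the inductive step), but the mathematical content is identical.
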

\begin{proof}
	Let $\rho>0$ and choose $(\tau,u)\in\cB$, $\tau\leq T$. By induction over $t\geq\tau$ we establish the existence of a $N_0=N_0(T)\in\N$ such that both \eqref{properr3} and
	\begin{equation}
		\vphi^n(t;\tau,u)\in B_\rho(\cB(t))\fall\tau\leq t\leq T,\,n\geq N_0
		\label{noind1}
	\end{equation}
	hold. This is trivially true for $t=\tau$. Assume that \eqref{properr3}--\eqref{noind1} are satisfied for some fixed $t<T$ and set $x_t:=d(\vphi^n(t;\tau,u),\vphi^0(t;\tau,u))$. Using the triangle inequality, 
	\begin{eqnarray*}
		x_{t+1}
		& \stackrel{\eqref{process}}{\leq} &
		d\bigl(\eF_t^n(\underbrace{\vphi^n(t;\tau,u)} _{\stackrel{\eqref{noind1}}{\in}B_\rho(\cB(t))}),\eF_t^n(\underbrace{\vphi^0(t;\tau,u)} _{\stackrel{\eqref{properr1}}{\in}\cB(t)})\bigr)
		+
		e_t^n(\vphi^0(t;\tau,u))\\		
%
		& \stackrel{\eqref{properror1}}{\leq} &
		\ell_t\bigl(B_\rho(\cB(t))\bigr)x_t
		+
		e_t^n(\underbrace{\vphi^0(t;\tau,u)}_{\stackrel{\eqref{properr1}}{\in}\cB(t)})\\		
		& \stackrel{\eqref{errest}}{\leq} &
		\ell_t\bigl(B_\rho(\cB(t))\bigr)x_t + C(\cB(t))\Gamma(\tfrac{1}{n})\\
		& \stackrel{\eqref{properr3}}{\leq} &
		\ell_t\bigl(B_\rho(\cB(t))\bigr)
		\Gamma(\tfrac{1}{n})\sum_{l_1=\tau}^{t-1}C(\cB(l_1))\prod_{l_2=l_1+1}^{t-1}\ell_{l_2}\bigl(B_\rho(\cB(l_2))\bigr)
		+C(\cB(t))\Gamma(\tfrac{1}{n})\\
		& = &
		\Gamma(\tfrac{1}{n})\sum_{l_1=\tau}^tC(\cB(l_1))\prod_{l_2=l_1+1}^t\ell_{l_2}\bigl(B_\rho(\cB(l_2))\bigr)
	\end{eqnarray*}
	results. For a sufficiently large $N_0$ it holds $d(\vphi^n(t+1;\tau,u),\vphi^0(t+1;\tau,u))<\rho$ and the desired inclusion $\vphi^n(t+1;\tau,u)\in B_\rho(\cB(t+1))$ results. 
\end{proof}

Rather than providing suitable conditions on \eqref{deqn} guaranteeing that the existence of a pullback attractor $\cA_0^\ast$ for $(\Delta_0)$ implies that also the perturbations \eqref{deqn}, $n\in\N$, have pullback attractors $\cA_n^\ast$ (\emph{persistence}), on the present abstract level we simply assume that such attractors $\cA_n^\ast$, $n\in\N_0$, do exist. Then the next result is a discrete time version of \cite[p.~152, Thm.~7.13]{bortolan:carvalho:langa:20}.
\begin{thm}[Upper-semicontinuity of pullback attractors] \label{thmuscpbatt}
	Let $\I$ be unbounded below. If each difference equation \eqref{deqn}, $n\in\N_0$, has a pullback attractor $\cA_n^\ast$ and 
	\begin{itemize}
		\item[(i)] $\bigcup_{n\in\N_0}\cAn^\ast(\tau)$ is relatively compact for every $\tau\in\I$, 

		\item[(ii)] $\bigcup_{n\in\N_0}\bigcup_{s\leq\tau}\cAn^\ast(s)$ is bounded for every $\tau\in\I$, 

		\item[(iii)] (\emph{convergence condition}) for all $\eps>0$, $S\in\N$ and every compact $K\subseteq X$ there exists an $n_0\in\N$ such that for all $\tau\in\I$ one has the implication
		$$
			n>n_0
			\Rightarrow
			\max_{0<s\leq S}\sup_{u\in U_{\tau-s}\cap K}d(\vphi^n(\tau;\tau-s,u),\vphi^0(\tau;\tau-s,u))
			<
			\eps
		$$
	\end{itemize}
	hold, then $\lim_{n\to\infty}\max_{t\in I}\dist{\cAn^\ast(t),\cA_0^\ast(t)}=0$ for every bounded $I\subseteq\I$. 
\end{thm}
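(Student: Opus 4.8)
The plan is to argue by contradiction with a diagonal-sequence extraction, using the dynamical characterization \eqref{besol} of pullback attractors together with \pref{properr} to control the error along finite orbit segments. Suppose the claimed uniform (over bounded $I\subseteq\I$) convergence fails. Then there exist $\eps_0>0$, a bounded $I_0\subseteq\I$, a sequence $n_k\to\infty$ and times $t_k\in I_0$ with $\dist{\cA_{n_k}^\ast(t_k),\cA_0^\ast(t_k)}\geq\eps_0$. Since $I_0$ is a finite discrete interval we may pass to a subsequence so that $t_k\equiv\tau$ is constant; picking $a_k\in\cA_{n_k}^\ast(\tau)$ realizing (up to $\eps_0/2$) the semidistance, we have $\mbox{dist}_{\cA_0^\ast(\tau)}(a_k)\geq\eps_0/2$ for all $k$. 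By hypothesis (i) the sequence $(a_k)$ lies in a relatively compact set, so after a further subsequence $a_k\to a$ with $\mbox{dist}_{\cA_0^\ast(\tau)}(a)\geq\eps_0/2$; in particular $a\notin\cA_0^\ast(\tau)$.

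The heart of the argument is then to show $a\in\cA_0^\ast(\tau)$, contradicting the above. For each $k$, invariance of the pullback attractor $\cA_{n_k}^\ast$ gives an entire solution $\phi^{(k)}$ of $(\Delta_{n_k})$ with $\phi^{(k)}_\tau=a_k$ and $\phi^{(k)}_s\in\cA_{n_k}^\ast(s)$ for all $s\in\I$. Fix any $S\in\N$ and look at the segment on $[\tau-S,\tau]$. By (ii), $\cB(s):=\overline{\bigcup_{n\in\N_0}\cA_n^\ast(s)}$ defines a bounded nonautonomous set, and by (i) each fiber $\cB(s)$ is compact; moreover $\phi^{(k)}_s\in\cB(s)$ and $\vphi^0(s;\tau-S,\phi^{(k)}_{\tau-S})\in\cA_0^\ast(s)\subseteq\cB(s)$ for $\tau-S\leq s\leq\tau$. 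Applying \pref{properr} on $[\tau-S,\tau]$ with this $\cB$ — whose hypothesis \eqref{properr1} and the local Lipschitz condition \eqref{properror1} we must verify hold in the present setting, the Lipschitz bound being part of the standing assumptions one invokes (or, if preferred, replaced by the weaker but sufficient estimate supplied by hypothesis (iii)) — yields $d\bigl(\phi^{(k)}_\tau,\vphi^0(\tau;\tau-S,\phi^{(k)}_{\tau-S})\bigr)\to0$ as $k\to\infty$, since $\phi^{(k)}_\tau=\vphi^{n_k}(\tau;\tau-S,\phi^{(k)}_{\tau-S})$ and the right-hand side of \eqref{properr3} tends to $0$ as $n_k\to\infty$ (the finitely many constants $C(\cB(l))$ and Lipschitz factors over $[\tau-S,\tau]$ being fixed once $S$ is fixed). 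Here one uses hypothesis (iii) precisely to make the convergence uniform in the initial data $\phi^{(k)}_{\tau-S}$, which range over the compact set $\cB(\tau-S)$.

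Now run the usual Cantor diagonal extraction backward in time: for each $S$ the points $\phi^{(k)}_{\tau-S}$ lie in the compact $\cB(\tau-S)$, so by a diagonal subsequence we obtain limits $b_{-S}:=\lim_k\phi^{(k)}_{\tau-S}\in\cB(\tau-S)$ for every $S\in\N$, with $b_0=a$. Continuity of $\vphi^0$ in the initial state together with the error estimate just derived forces $a=\lim_k\vphi^{n_k}(\tau;\tau-S,\phi^{(k)}_{\tau-S})=\vphi^0(\tau;\tau-S,b_{-S})$, and similarly $b_{-S+1}=\vphi^0(\tau-S+1;\tau-S,b_{-S})$, so the $b_{-S}$ (interpolated forward by $\vphi^0$ on $[\tau-S,\tau]$, consistently by the process property) assemble into a single entire solution $\psi$ of $(\Delta_0)$ with $\psi_\tau=a$. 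This solution is bounded: every $\psi_s$ with $s\leq\tau$ equals $\lim_k\phi^{(k)}_s\in\cB(s)$, and $\cB$ is bounded by (ii), while for $s\geq\tau$ forward iteration keeps $\psi_s=\vphi^0(s;\tau,a)$ in the bounded forward image; hence $\psi$ is a bounded entire solution, so by \eqref{besol} we get $a=\psi_\tau\in\cA_0^\ast(\tau)$ — the sought contradiction. I expect the main obstacle to be the bookkeeping in this backward diagonal argument, in particular making the convergence in \pref{properr} genuinely uniform over the compact sets of initial data on each $[\tau-S,\tau]$ (this is exactly what (iii) buys, but it has to be threaded carefully through the estimate \eqref{properr3}), and checking that the Lipschitz hypothesis \eqref{properror1} of \pref{properr} is available — or cleanly circumvented via (iii) — in the abstract setting at hand.
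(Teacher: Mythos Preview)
Your approach is essentially the same as the paper's: a contradiction argument combined with backward diagonal extraction of entire solutions, using (i) for compactness at each fixed time, (iii) for convergence of the perturbed trajectories to the unperturbed ones along finite segments, (ii) for boundedness of the limiting entire solution, and finally the characterization \eqref{besol} to place the limit in $\cA_0^\ast(\tau)$.

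One clarification is worth making. Drop the appeal to \pref{properr} entirely: its Lipschitz hypothesis \eqref{properror1} is \emph{not} part of the assumptions of \tref{thmuscpbatt}, so in this abstract setting you cannot invoke it. As you correctly anticipate in your final paragraph, hypothesis (iii) already delivers exactly the estimate you need --- uniform closeness of $\vphi^{n_k}(\tau;\tau-S,\cdot)$ to $\vphi^0(\tau;\tau-S,\cdot)$ on the compact set $\cB(\tau-S)$ --- with no Lipschitz structure required. The paper's proof does precisely this, using (iii) directly and never touching \pref{properr}. So the hedging in your write-up (``or, if preferred, replaced by \ldots (iii)'') should simply become the main line of argument. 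Also, for the forward-in-time boundedness of your limit solution $\psi$, note that $\psi_s$ for $s>\tau$ is likewise a limit of $\phi^{(k)}_s\in\cA_{n_k}^\ast(s)\subseteq\cB(s)$ (via (iii) again), rather than relying on a vague ``bounded forward image''.
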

\begin{proof}
	Let $n\in\N_0$. Recall from \eqref{besol} that each fiber $\cAn^\ast(t)\subseteq\cU(t)$, $t\in\I$, consists of initial values for a bounded entire solutions $\phin$ to \eqref{deqn}. The proof is then divided into two claims. 

	(I) Claim: \emph{There is a bounded entire solution $\phi^0$ to $(\Delta_0)$ and a subsequence $\intoo{\phi^{n_k}}_{k\in\N}$ of $\intoo{\phin}_{n\in\N}$ such that $\phi^0_t\in\cA_0^\ast(t)$ for all $t\in\I$ and $\intoo{\phi^{n_k}}_{k\in\N}$ converges to $\phi^0$ uniformly on bounded subintervals of $\I$.}
		
	In fact, $\intoo{\phin_\tau}_{n\in\N}$ can be seen as sequence in $\overline{\bigcup_{n\in\N_0}\cAn^\ast(\tau)}$ with $\tau\in\I$. Combining this with the fact from (i) that $\overline{\bigcup_{n\in\N_0}\cAn^\ast(\tau)}$ is compact yields the existence of an infinite subset $N_0\subseteq\N$ so that the subsequence $\intoo{\phin_\tau}_{n\in N_0}$ of $\intoo{\phin_\tau}_{n\in\N}$ converges to a limit $u_\tau\in\overline{\bigcup_{n\in\N_0}\cAn^\ast(\tau)}$. If we set $\phi^0_t:=\vphi^0(t;\tau,u_\tau)$ for each $t\in\I_\tau^+$, then $\intoo{\phin}_{n\in N_0}$ converges to $\phi^0$ uniformly on bounded subintervals of $\I_\tau^+$ due to (iii).
	
	Now by mathematical induction, we suppose that there exists a bounded forward solution $(\phi_t^0)_{s\leq t}$, $s\leq\tau$, to $(\Delta_0)$ as well as infinite subsets $N_s\subset N_{s-1}$ such that $\intoo{\phin}_{n\in N_s}$ converges to $\phi^0$ uniformly in bounded subintervals of $\I_s^+$. Similarly to the above, since the sequence $\intoo{\phin_{s-1}}_{n\in N_s}$ is contained in the compact set $\overline{\bigcup_{n\in\N_0}\cAn^\ast(s-1)}$, there is an infinite subset $N_{s+1}\subset N_s$ so that the subsequence $\intoo{\phin_{s-1}}_{n\in N_{s+1}}$ of $\intoo{\phin_{s-1}}_{n\in N_s}$ is convergent to $u_{s-1}\in\overline{\bigcup_{n\in\N_0}\cAn^\ast(s-1)}$. Therefore, $(\phi_t^0)_{s-1\leq t}$ is a bounded solution to $(\Delta_0)$ and again by (iii), the sequence $\intoo{\phin}_{n\in N_{s+1}}$ converges to $\phi^0$ uniformly in bounded subintervals of $\I_{s-1}^+$.
	
	Repeating this procedure eventually results in a bounded entire solution $\phi^0$ to the equation $(\Delta_0)$ and thus, $\phi^0_t\in\cA_0^\ast(t)$ for each $t\in\I$ due to (ii). Moreover, each sequence $\intoo{\phin_t}_{n\in\N}$ in $\cAn^\ast(t)$ has a uniformly convergent subsequence $\intoo{\phi^{n_s}_t}_{n_s\in N_s}$ converging to the limit $\phi^0_t\in\cA_0^\ast(t)$ for each $t\in\I$ as $n_s$ is the $s$-th element of $N_s\subseteq\N$, $s\in\N$. This completes the proof of the first claim.
	
	(II) Claim: \emph{$\lim_{n\to\infty}\dist{\cAn^\ast(t),\cA_0^\ast(t)}=0$ for all $t\in\I$.}\\
	By means of contradiction, assume the limit in the claim fails for some instant $t\in\I$, i.e.\ there are $\eps_0>0$ and $n_0\in\N$ such that $\dist{\cA_{n_0}^\ast(t),\cA_0^\ast(t)}\geq\eps_0$ holds. Because $\cA_{n_0}^\ast(t)$ is compact, there exists a bounded entire solution $\phi^{n_0}$ with $\phi_t^{n_0}\in\cA_{n_0}^\ast(t)$ so that $\mbox{dist}_{\cA_0^\ast(t)}(\phi^{n_0}_t)\geq \eps_0$, a contradiction to claim (I) as for every bounded entire solution $\phin$, the values $\phi_t^n\in\cAn^\ast(t)$ have a convergent subsequence with limit in $\cA_0^\ast(t)$.
	
	Because bounded subsets $I\subseteq\I$ are finite, the assertion follows from (II). 
\end{proof}

In contrast, a related result addressing the forward dynamics inherently yields the existence of limit sets: 
\begin{thm}[Upper-semicontinuity of forward limit sets]
	\label{thmusfw}
	Let $\I$ be unbounded above. Suppose a bounded set $\cA\subseteq\cU$ is positively invariant for every difference equation \eqref{deqn}, $n\in\N_0$. If each \eqref{deqn}, $n\in\N_0$, is strongly $\cA$-asymp\-to\-ti\-cally compact with a compact $K_n\subseteq U$, then its forward limit set $\omega_{\cA,n}^+\subseteq K_n$ is nonempty, compact and forward attracts $\cA$. If $K:=\bigcup_{n\in\N_0}K_n$ and moreover 
	\begin{itemize}	
		\item[(i)] $\omega_{\cA,0}^+$ attracts $K$ uniformly in $\tau\in\I$, i.e.,
		$$
			\lim_{s\to\infty}\sup_{\tau\in\I}\dist{\vphi^0(\tau+s;\tau,K),\omega_{\cA,0}^+}=0, 
		$$

		\item[(ii)] for every sequence $\intoo{(s_k,\tau_k)}_{k\in\N}$ in $\N\tm\I$ with $\tau_k\to\infty$ as $k\to\infty$ one has
		$$
			\lim_{k\to\infty}\dist{\vphi^n(\tau_k+s_k;\tau_k,K_n),K_n}=0\fall n\in\N_0,
		$$
		
		\item[(iii)] for all $\eps>0$, $S\in\N$ and $n\in\N_0$, there exists a $\delta>0$ such that for all $\tau\in\I$ one has the implication
		$$
			\left.
			\begin{array}{c}
				u,v\in\cA(\tau)\cup K_n,\\
				d(u,v)<\delta
			\end{array}
			\right\}
			\Rightarrow
			\max_{0<s\leq S} d(\vphi^n(\tau+s;\tau,u),\vphi^n(\tau+s;\tau,v))<\eps,
		$$
		
		\item[(iv)] (\emph{convergence condition}) for all $\eps>0$, $S\in\N$, there exists an $n_0\in\N$ such that for all $\tau\in\I$ one has the implication
		$$
			\left.
			\begin{array}{c}
				u\in\cA(\tau)\cup K,\\
				n>n_0
			\end{array}
			\right\}
			\Rightarrow
			\max_{0<s\leq S} d(\vphi^n(\tau+s;\tau,u),\vphi^0(\tau+s;\tau,u))<\eps
		$$
	\end{itemize}
	hold, then $\lim_{n\to\infty}\dist{\omega_{\cA,n}^+,\omega_{\cA,0}^+}=0$.
\end{thm}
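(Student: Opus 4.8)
The first assertion — that each forward limit set $\omega_{\cA,n}^+$ is nonempty, compact and forward attracts $\cA$ — is immediate from \tref{thmfwlim} applied to $(\Delta_n)$ once we observe that strong $\cA$-asymptotic compactness implies $\cA$-asymptotic compactness for positively invariant $\cA$ (as noted after the definition). So the content is the upper-semicontinuity estimate $\lim_{n\to\infty}\dist{\omega_{\cA,n}^+,\omega_{\cA,0}^+}=0$, and I would argue by contradiction: suppose there is $\eps_0>0$ and a subsequence (still indexed by $n$) together with points $w_n\in\omega_{\cA,n}^+$ with $\dist{w_n}{\omega_{\cA,0}^+}\geq\eps_0$. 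Since $w_n\in K_n\subseteq K$ and $\overline K$ is compact (being a countable union of compact sets that is contained in $U$ — here I may need $\overline K$ compact as an extra consequence of the hypotheses, or argue within $\bigcup K_n$ using that $\omega_{\cA,0}^+$ attracts all of $K$), we may pass to a further subsequence so that $w_n\to w$ with $\dist{w}{\omega_{\cA,0}^+}\geq\eps_0$.

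The plan is then to show $w\in\omega_{\cA,0}^+$, contradicting the above. By the characterization of the forward limit set via $\Omega_{\cA,n}$, each $w_n\in\omega_{\cA,n}^+=\overline{\bigcup_{\tau\in\I}\Omega_{\cA,n}(\tau)}$, so there are $\tau_n\in\I$ and $a_n\in\cA(\tau_n)$ and $s_n\in\N$ with $s_n\to\infty$ such that $\vphi^n(\tau_n+s_n;\tau_n,a_n)$ is within $\tfrac1n$ of $w_n$ (unwinding the double closure in the definition of $\Omega_{\cA,n}$, using positive invariance of $\cA$ to keep the orbit in $\cA$). The key decomposition is, via the triangle inequality \eqref{hs2}-style splitting for points,
\begin{align*}
	d\bigl(\vphi^n(\tau_n+s_n;\tau_n,a_n),\omega_{\cA,0}^+\bigr)
	&\leq
	d\bigl(\vphi^n(\tau_n+s_n;\tau_n,a_n),\vphi^0(\tau_n+s_n;\tau_n,a_n)\bigr)\\
	&\quad+
	\dist{\vphi^0(\tau_n+s_n;\tau_n,a_n)}{\omega_{\cA,0}^+}.
\end{align*}
The second term tends to $0$ because $(\Delta_0)$ is $\cA$-asymptotically compact and $\omega_{\cA,0}^+$ forward attracts $\cA$ (first assertion with $n=0$), applied along the times $\tau_n+s_n$; if $\tau_n$ does not go to infinity one restricts to finitely many $\tau$-fibers and uses plain forward attraction. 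For the first term I want to split the orbit at an intermediate time: write $\tau_n+s_n=(\tau_n+s_n-S)+S$ for a fixed large $S$, put $b_n:=\vphi^n(\tau_n+s_n-S;\tau_n,a_n)\in\cA(\tau_n+s_n-S)$ (positive invariance again), compare $\vphi^n(\cdot;\tau_n+s_n-S,b_n)$ with $\vphi^0(\cdot;\tau_n+s_n-S,b_n)$ over the window of length $S$ using the convergence condition (iv), and absorb the mismatch of the initial segments using (iii) — this is exactly the pattern of \pref{properr}/\tref{thmuscpbatt}(iii).

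The main obstacle, and where I would spend the most care, is controlling the first term uniformly as $s_n\to\infty$: the convergence condition (iv) only gives smallness of $d(\vphi^n,\vphi^0)$ over windows of bounded length $S$, not over the growing horizon $s_n$. This is precisely why hypotheses (i)–(iii) are present. The resolution is a "shadowing over a finite tail" argument: given $\eps>0$, first use (i) to pick $S$ so large that $\dist{\vphi^0(\sigma+S;\sigma,K)}{\omega_{\cA,0}^+}<\eps$ for all $\sigma\in\I$; then use (ii) to know that the $n$-orbit of $K_n$ essentially stays in $K_n$, so that at time $\tau_n+s_n-S$ both the $n$-orbit and the $0$-orbit of $a_n$ lie within a controlled neighborhood of $K_n$ (using the first assertion, that $\cA$ is forward attracted to $K_n$ by $(\Delta_n)$, so $b_n$ is $\eps$-close to $K_n$ once $s_n-S$ is large); then apply (iii) to replace $b_n$ by a nearby point of $K_n$ with only $\eps$ error propagated over the last $S$ steps, and (iv) to pass from $\vphi^n$ to $\vphi^0$ on that last window of length $S$, and finally (i) again to land near $\omega_{\cA,0}^+$. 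Collecting the $O(\eps)$ errors and letting $n\to\infty$ then $\eps\to0$ forces $\dist{w_n}{\omega_{\cA,0}^+}\to0$, the desired contradiction. A subtle point to handle cleanly is the bookkeeping of which compact set ($K$, $K_n$, or $\overline{\cA(\tau)\cup K_n}$) each intermediate point lives in, so that (iii) and (iv) are applied to admissible arguments; I would state a short lemma isolating "the orbit of $a_n\in\cA(\tau_n)$ under $\vphi^n$ enters and stays in a fixed $\eps$-neighborhood of $K_n$ after a time independent of $n$ and $\tau_n$" before assembling the estimate.
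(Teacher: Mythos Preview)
Your overall contradiction framework and the idea of combining (i) with (iv) over a \emph{fixed} time window are correct and match the paper. The gap is in how you arrange to be in $K$ at the start of that window.

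Your proposed lemma --- that the $\vphi^n$-orbit of $a_n\in\cA(\tau_n)$ enters an $\eps$-neighborhood of $K_n$ after a time \emph{independent of $n$ and $\tau_n$} --- does not follow from the hypotheses. Strong $\cA$-asymptotic compactness and the forward attraction in the first assertion are statements for each \emph{fixed} $n$; nothing in (i)--(iv) gives the uniformity in $n$ (along your subsequence $n\to\infty$) that you need. Without it, your intermediate point $b_n=\vphi^n(\tau_n+s_n-S;\tau_n,a_n)$ is only known to lie in $\cA(\sigma_n)$, and (i) cannot be applied because it requires the initial point to lie in $K$. Applying (iv) directly to $b_n\in\cA(\sigma_n)$ is legitimate, but then you are left with $\vphi^0(\sigma_n+S;\sigma_n,b_n)$, for which neither (i) nor plain forward attraction of $\omega_{\cA,0}^+$ gives closeness to $\omega_{\cA,0}^+$ at the \emph{fixed} horizon $S$ uniformly in $\sigma_n$.

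The paper sidesteps this by invoking the \emph{asymptotic negative invariance} of $\omega_{\cA,n}^+$ (established in \cite[Thm.~4.10]{huynh:poetzsche:kloeden:20}, whose proof is where hypotheses (ii) and (iii) enter): for $w_j\in\omega_{\cA,n_j}^+$ and the fixed horizon $T_0$ coming from (i), there exist times $\tau_0$, $t_0=\tau_0+T_0$ and a point $u_j\in\omega_{\cA,n_j}^+\subseteq K_{n_j}\subseteq K$ with $d(\vphi^{n_j}(t_0;\tau_0,u_j),w_j)<\eps_0$. Since $u_j$ is \emph{already} in $K$, one applies (iv) over the fixed window $T_0$ and then (i) directly, yielding $\mbox{dist}_{\omega_{\cA,0}^+}(w_j)<3\eps_0$ --- with no uniformity-in-$n$ issue and no need to extract a convergent subsequence of the $w_j$ (so your worry about compactness of $\overline K$ never arises). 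The missing ingredient in your plan is precisely this backward representation from within $K$; your forward-splitting maneuver does not provide a substitute under the stated assumptions.
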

\begin{proof}
	To begin with, recall from \tref{thmfwlim} that each \eqref{deqn}, $n\in\N_0$, has a nonempty, compact, forward limit set $\omega_{\cA,n}^+\subseteq K_n$. Moreover, thanks to \cite[Thm.~4.10]{huynh:poetzsche:kloeden:20}, $\omega_{\cA,n}^+$ is \emph{asymptotically negatively invariant}, i.e., for all $u\in\omega_{\cA,n}^+$, $\eps>0$ and $T>0$, there is an integer $S_\eps$ satisfying $\tau+S_\eps-T\in\I$ and a point $u_\eps\in\omega_{\cA,n}^+$ such that
	$$
		d(\vphi^n(\tau+S_\eps;\tau+S_\eps-T,u_\eps),u)<\eps\fall\tau\in\I,\,n\in\N_0. 
	$$
	In order to establish $\lim_{n\to\infty}\dist{(\omega_{\cA,n}^+,\omega_{\cA,0}^+)}=0$ we proceed by contradiction. Assume that the above limit does not hold. Then there is a sequence $(n_j)_{j\in\N}$ in $\N$ satisfying $n_j\to\infty$ as $j\to\infty$ and an $\eps_0>0$ such that
	\begin{align*}
		4\eps_0\leq\dist{\omega_{\cA,n_j}^+,\omega_{\cA,0}^+}\fall j\in\N.
	\end{align*}
	By the compactness of $\omega_{\cA,n_j}^+$, the supremum in the definition of $\dist{\omega_{\cA,n_j}^+,\omega_{\cA,0}^+}$ is attained and so is the existence of a point $w_j\in\omega_{\cA,n_j}^+$ that satisfies
	\begin{align} \label{clmIathm26}
		4\eps_0
		\leq
		\dist{\omega_{\cA,n_j}^+,\omega_{\cA,0}^+}
		=
		\mbox{dist}_{\omega_{\cA,0}^+}(w_j).
	\end{align}
	On the other hand, from (i), there is a time $T_0\in\N$ such that
	\begin{align} \label{clmIbthm26}
		\dist{\vphi^0(\tau+s;\tau,K),\omega_{\cA,0}^+}<\eps_0
		\fall s\geq T_0,\,\tau\in\I.
	\end{align}
	Since $T_0>0$ and $w_j\in\omega_{\cA,n_j}^+$, the asymptotic negative invariance of $\omega_{\cA,n_j}^+$ yields that there is an $s_0=s_0({\eps_0})\in\I$ with $t_0:=\tau+s_0$, a $\tau_0:=t_0-T_0\in\I$ and a $u_j=u_j(n_j,T_0)\in\omega_{\cA,n_j}^+\subseteq K_{n_j}$ so that 
	$$
		d\left(w_j,\vphi^{n_j}(t_0;\tau_0,u_j)\right)<\eps_0.
	$$
	Now notice that as $u_j\in\cA(\tau_0)\cup K$, with integers $N=N(\eps_0,T_0)>0$ independent of $\tau_0$ and $n_j\in\Z^+_{N}$, our assumption (iv) yields
	\begin{align*}
		d(\vphi^{n_j}(t_0;\tau_0,u_j), \vphi^0(t_0;\tau_0,u_j))<\eps_0, 
	\end{align*}
	as well as \eqref{clmIbthm26} results in
	\begin{align*}
		\mbox{dist}_{\omega_{\cA,0}^+}(\vphi^0(t_0;\tau_0,u_j))
		<
		\dist{\vphi^0(t_0;\tau_0,K),\omega_{\cA,0}^+}<\eps_0.
	\end{align*}
	Combining the last three inequalities, we obtain
	\begin{multline*}
		\mbox{dist}_{\omega_{\cA,0}^+}(w_j)
		\leq
		d\left(w_j,\vphi^{n_j}(t_0;\tau_0,u_j)\right)
		+ 
		d\left(\vphi^{n_j}(t_0;\tau_0,u_j),\vphi^0(t_0;\tau_0,u_j)\right) \\
		+
		\mbox{dist}_{\omega_{\cA,0}^+}(\vphi^0(t_0;\tau_0,u_j))
		<
		\eps_0+\eps_0+\eps_0=3\eps_0,
	\end{multline*}
	a contradiction to \eqref{clmIathm26}. The proof is done. 
\end{proof}
\section{Integrodifference equations under discretizations}
\label{sec3}
In this Section we apply our abstract perturbation theory for general difference equations $(\Delta_0)$ to nonautonomous integrodifference equations. They are recursions of the form
\begin{equation}
	\tag{$I_0$}
	\boxed{u_{t+1}=\int_\Omega f_t(\cdot,y,u_t(y))\,{\mathrm d} y,}
\end{equation}
where $\Omega\subset\R^\kappa$ is assumed to be nonempty and compact. As state space for \eqref{ide0} we consider an ambient subset of the $\R^d$-valued continuous functions over $\Omega$, abbreviated by $\Cd$ and equipped with the maximum norm $\norm{u}:=\max_{x\in\Omega}\abs{u(x)}$. 
\subsection{Basics and collocation}
Throughout we suppose the following standing assumption in order to obtain well-definedness for \eqref{ide0}: 
\begin{hyp}
	Suppose for all $t\in\I'$ that a subset $Z_t\subseteq\R^d$ is nonempty and closed, while a kernel function $f_t:\Omega\tm\Omega\tm Z_t\to\R^d$ is continuous and satisfies
	\begin{itemize}
		\item[$(H_1)$] there exist $\alpha_t,\beta_t:\Omega^2\to\R_+$ measurable in the second argument with
		\begin{align*}
			a_t&:=\sup_{x\in\Omega}\int_\Omega \alpha_t(x,y)\,{\mathrm d} y<\infty,&
			b_t&:=\sup_{x\in\Omega}\int_\Omega \beta_t(x,y)\,{\mathrm d} y<\infty
		\end{align*}
		and
		\begin{equation}
			\abs{f_t(x,y,z)}
			\leq
			\beta_t(x,y)+\alpha_t(x,y)|z|
			\fall x,y\in\Omega,\,z\in Z_t, 
			\label{assumpK0}
		\end{equation}

		\item[$(H_2)$] $\int_\Omega f_t(x,y,u(y))\,{\mathrm d} y\in Z_{t+1}$ for all $x\in\Omega$ and $u\in U_t$ with
		$$
			U_t:=\set{u\in\Cd:\,u(x)\in Z_t\text{ for all }x\in\Omega}.
		$$
	\end{itemize}
\end{hyp}
The continuity of the kernel functions $f_t$ guarantees that the Urysohn operators
\begin{align}
	\eF_t:U_t&\to\Cd,&
	\eF_t(u)&:=\int_\Omega f_t(\cdot,y,u(y))\,{\mathrm d} y
	\label{rhs}
\end{align}
are completely continuous on the closed sets $U_t$ (cf.~\cite{martin:76,poetzsche:18}). Furthermore, Hypothesis $(H_2)$ ensures the inclusion $\eF_t(U_t)\subseteq U_{t+1}$ for all $t\in\I'$. In particular, \eqref{ide0} is a special case of $(\Delta_0)$ in the metric space $X=\Cd$, $d(u,v)=\norm{u-v}$. 

In order to simulate IDEs \eqref{ide0} on a computer, finite-dimen\-sional approximations of their right-hand sides \eqref{rhs} and of their state space $C_d$ are due. For this purpose, choose linearly independent functions $\phi_1,\ldots,\phi_{d_n}\in C_1$ yielding an \emph{ansatz space} $X_n:=\spann\set{\phi_1,\ldots,\phi_{d_n}}$ of finite dimension $d_n$. With a projector $\pi_n\in L(C_1)$ onto the space $X_n$ it is clear that 
\begin{align*}
	\Pi_n&\in L(\Cd),&
	\Pi_n(u_j)_{j=1}^d&:=(\pi_nu_j)_{j=1}^d
\end{align*}
is a projector onto the Cartesian product $X_n^d$. Supplementing this, for convenience we define $\Pi_0:=\id_{C_d}$. In the following, we impose the following stability assumption (cf.~\cite[p.~50, Def.~4.8]{hackbusch:14}): 
\begin{hyp}
	\begin{itemize}
		\item[$(H_3)$] The projections $\Pi_n$ are uniformly bounded, i.e.\ 
		\begin{equation}
			p:=\sup_{n\in\N}\norm{\Pi_n}\in[1,\infty),
			\label{condstab}
		\end{equation}

		\item[$(H_4)$] $\Pi_nU_t\subseteq U_t$ for all $n\in\N$ and $t\in\I'$. 
	\end{itemize}
\end{hyp}
A \emph{collocation method} is a spatial discretization of \eqref{ide0} of the form
\begin{align}
	\tag{$I_n$}
	u_{t+1}&=\eF_t^n(u_t),&
	\eF^n_t&:=\Pi_n\eF_t:U_t\to X_n^d. 
	\label{iden}
\end{align}
Thanks to $(H_4)$ these difference equations are well-defined and fit into the general framework of \eqref{deqn}, $n\in\N$. Based on Hypothesis $(H_1)$ we next identify absorbing sets for IDEs \eqref{ide0} and their spatial discretizations. 
\begin{prop}[Absorbing sets for \eqref{iden}]\label{propabc}
	Let $\rho>1$ and suppose $(H_1$--$H_4)$ hold.
	\begin{enumerate}
		\item If $\I$ is unbounded below, and for all $\tau\in\I$ one has
		\begin{align}
			\lim_{s\to\infty}\prod_{l=\tau-s}^{\tau-1}pa_l&=0,&
			R_\tau&:=p\sum_{l_1=-\infty}^{\tau-1}b_{l_1}\prod_{l_2=l_1+1}^{\tau-1}pa_{l_2}\in(0,\infty)
			\label{propabc1}
		\end{align}
		and $\sup_{\tau\in\I}R_\tau<\infty$, then $\cA:=\set{(\tau,u)\in\cU:\,\norm{u}\leq\rho R_\tau}$ is pullback absorbing for each col\-location discretization \eqref{iden}, $n\in\N_0$. 

		\item If $\I$ is unbounded above, and for all $\tau\in\I$ one has
		\begin{align}
			\lim_{s\to\infty}\prod_{l=\tau}^{\tau+s-1}pa_l&=0,&
			R_\tau&:=p\lim_{t\to\infty}\sum_{l_1=\tau}^{t-1}b_{l_1}\prod_{l_2=l_1+1}^{t-1}pa_{l_2}\in(0,\infty)
			\label{propabc2}
		\end{align}
		and $\sup_{\tau\in\I}R_\tau<\infty$, then $\cA:=\set{(\tau,u)\in\cU:\,\norm{u}\leq\rho R_\tau}$ is forward absorbing for each collocation discretization \eqref{iden}, $n\in\N_0$. 
	\end{enumerate}
\end{prop}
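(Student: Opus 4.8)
The plan is to estimate the norm of iterates of the collocation discretization \eqref{iden} by exploiting the linear growth bound \eqref{assumpK0} together with the uniform bound \eqref{condstab} on the projectors $\Pi_n$, and then to recognize the resulting recursion as a linear scalar difference inequality whose solution is controlled by the quantities $R_\tau$. First I would record the pointwise estimate: for $u\in U_t$ and any $n\in\N_0$, using $(H_1)$ and writing $\Pi_0=\id$, one has $\abs{(\eF_t^nu)(x)}=\abs{(\Pi_n\eF_tu)(x)}\leq\norm{\Pi_n\eF_tu}\leq p\norm{\eF_tu}\leq p\bigl(b_t+a_t\norm{u}\bigr)$ for all $x\in\Omega$, hence $\norm{\eF_t^nu}\leq pb_t+pa_t\norm{u}$ with the convention $p=\norm{\Pi_0}=1$ absorbed into $p$ (or simply noting the bound holds a fortiori for $n=0$). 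Iterating this from an arbitrary initial state $u_\tau\in U_\tau$, and setting $r_t:=\norm{\vphi^n(t;\tau,u_\tau)}$, one gets $r_{t+1}\leq pa_tr_t+pb_t$, whose solution obeys
\begin{align*}
	r_t\leq\Bigl(\prod_{l=\tau}^{t-1}pa_l\Bigr)r_\tau+p\sum_{l_1=\tau}^{t-1}b_{l_1}\prod_{l_2=l_1+1}^{t-1}pa_{l_2}
\end{align*}
for all $t\geq\tau$; the analogous backward bookkeeping produces the pullback estimate.

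For part (a), $\I$ unbounded below: fix $\tau\in\I$ and a bounded $\cB\subseteq\cU$, say $\cB(s)\subseteq\bar B_M(0)$ for all $s$. Apply the iterated estimate with initial time $\tau-s$ and initial state in $\cB(\tau-s)$, so $r_{\tau-s}\leq M$; then $\norm{\vphi^n(\tau;\tau-s,\cB(\tau-s))}\leq M\prod_{l=\tau-s}^{\tau-1}pa_l+p\sum_{l_1=\tau-s}^{\tau-1}b_{l_1}\prod_{l_2=l_1+1}^{\tau-1}pa_{l_2}$. By the first condition in \eqref{propabc1} the first term tends to $0$ as $s\to\infty$, while the second term is bounded above by $R_\tau$ (it is a partial tail of the convergent series defining $R_\tau$, all terms nonnegative). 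Hence for $s$ large enough the whole expression is $<\rho R_\tau$, since $\rho>1$; choosing the absorption time $T=T(\tau,\cB)$ accordingly, and noting that the same $T$ works simultaneously for every $n\in\N_0$ because the bound is $n$-independent, we conclude that $\cA(\tau)=\bar B_{\rho R_\tau}(0)\cap U_\tau$ absorbs in the pullback sense. Boundedness of $\cA$ as a nonautonomous set follows from $\sup_{\tau\in\I}R_\tau<\infty$. Part (b), $\I$ unbounded above, is symmetric: for fixed $\tau$ and bounded $\cB(\tau)\subseteq\bar B_M(0)$ one estimates $\norm{\vphi^n(\tau+s;\tau,\cB(\tau))}\leq M\prod_{l=\tau}^{\tau+s-1}pa_l+p\sum_{l_1=\tau}^{\tau+s-1}b_{l_1}\prod_{l_2=l_1+1}^{\tau+s-1}pa_{l_2}$; the first term vanishes as $s\to\infty$ by the first condition in \eqref{propabc2}, and the second term is a partial sum increasing to $R_\tau$ (its defining limit), so is $\leq R_\tau<\rho R_\tau$ for all $s$; thus for $s\geq T(\tau,\cB)$ one lands in $\cA(\tau+s)$, uniformly in $n\in\N_0$, and $\cA$ is forward absorbing.

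Two points require a little care, and these are where I expect the only real work. First, one must check that $\cA$ actually is contained in $\cU$ and has the right fibers, i.e.\ that $\cA(\tau)=\set{u\in\Cd:\norm{u}\leq\rho R_\tau}\cap U_\tau$ is nonempty — this is immediate if $0\in Z_\tau$, but in general one should phrase $\cA(\tau):=\set{u\in U_\tau:\norm u\leq\rho R_\tau}$ and note it is nonempty because the pullback/forward iterates just constructed eventually lie in it (alternatively invoke $(H_2)$ with a fixed reference state). Second, and more substantively, one must verify that the absorption property is the \emph{pullback} (resp.\ \emph{forward}) variant in the precise sense recalled after the definition of absorbing sets: for pullback absorption the present time $\tau$ is fixed and $s\to\infty$, which is exactly what the estimate above delivers; for forward absorption the initial time $\tau$ is fixed and $s\to\infty$, again matching. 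The key structural observation making all of this work is that the stability bound \eqref{condstab} makes every constant in the growth estimate independent of the discretization index $n$, so a single absorbing set $\cA$ serves all of \eqref{iden}, $n\in\N_0$, simultaneously — which is precisely what is needed to feed \pref{propabc} into the perturbation theorems of Section~\ref{sec2}.
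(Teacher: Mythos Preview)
Your approach is the paper's: derive the uniform growth bound $\norm{\eF_t^n(u)}\leq pb_t+pa_t\norm{u}$ from $(H_1)$ and \eqref{condstab}, then iterate; the paper simply cites \cite[Prop.~3.2 resp.~4.16]{huynh:poetzsche:kloeden:20} for the iteration step you spell out.

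One technical slip in part (b): the quantity $p\sum_{l_1=\tau}^{\tau+s-1}b_{l_1}\prod_{l_2=l_1+1}^{\tau+s-1}pa_{l_2}$ is \emph{not} a monotone partial sum of a fixed nonnegative series, because both the upper summation limit and the upper product limit move with $s$ (in (a) only the lower summation limit moved, so your argument there is fine). For instance, with $pa_l\equiv 0$ this sum equals $pb_{\tau+s-1}$, which can exceed its limit $R_\tau$ for small $s$. The fix is easy: the sum \emph{converges} to $R_\tau$ by the very definition \eqref{propabc2}, hence is $<\rho R_\tau$ for $s$ large; and to land in $\cA(\tau+s)$ rather than $\cA(\tau)$ you also need $R_{\tau+s}=R_\tau$, which follows from the identity $r_\tau(t)=r_{\tau+1}(t)+pb_\tau\prod_{l_2=\tau+1}^{t-1}pa_{l_2}$ together with the first limit in \eqref{propabc2} (this is exactly the computation in \cref{corfw}).
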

\begin{rem}
	(1) When dealing with the initial equation \eqref{ide0}, i.e.\ in case $n=0$, one can choose $p=1$. 

	(2) If the reals $a_t=:a$, $b_t=:b$ obtained from the linear growth assumption \eqref{assumpK0} are constant in time, then the condition $a\in[0,\tfrac{1}{p})$ implies that both the pullback and forward absorbing sets have constant fibers
	$$
		\cA=\set{(t,u)\in\cU:\,\norm{u}\leq\tfrac{\rho pb}{1-pa}}.
	$$
	In particular, for constant functions $\alpha_t(x,y)\equiv:\alpha$, $\beta_t(x,y)\equiv:\beta$ in \eqref{assumpK0}, then $a=\alpha\mu_\kappa(\Omega)$, $b=\beta\mu_\kappa(\Omega)$, 
	where $\mu_\kappa(\Omega)>0$ is the Lebesgue measure of $\Omega\subset\R^\kappa$.
\end{rem}
\begin{proof}
	Let $t\in\I'$, $u\in U_t$ and $n\in\N_0$. For every $x\in\Omega$ we obtain
	\begin{eqnarray*}
		\abs{\eF_t^n(u)(x)}
		& \stackrel{\eqref{condstab}}{\leq} &
		p\abs{\eF_t(u)(x)}
		\stackrel{\eqref{ide0}}{\leq}
		p\int_\Omega\abs{f_t(x,y,u(y))}\,{\mathrm d} y\\
		& \stackrel{\eqref{assumpK0}}{\leq} &
		p\int_\Omega \beta_t(x,y)\,{\mathrm d} y+p\int_\Omega\alpha_t(x,y)\abs{u(y)}\,{\mathrm d} y
		\leq
		pb_t+pa_t\norm{u}
	\end{eqnarray*}
	and passing to the supremum over $x\in\Omega$ yields $\norm{\eF_t(u)}\leq pb_t+pa_t\norm{u}$. Then both assertions (a) and (b) for \eqref{iden} result from \cite[Prop.~3.2 resp.\ 4.16]{huynh:poetzsche:kloeden:20}, where the condition $\sup_{\tau\in\I}R_\tau<\infty$ guarantees boundedness of $\cA$. \end{proof}

\begin{hyp}
	\begin{itemize}
		\item[$(H_5)$] For every $t\in\I'$ and $r>0$, there exists a function $\lambda_t:\Omega\tm\Omega\to\R_+$ measurable in the second argument with
		\begin{equation}
			\abs{f_t(x,y,z)-f_t(x,y,\bar z)} \leq \lambda_t(x,y)\abs{z-\bar z}
			\fall z,\bar z\in Z_t\cap\bar B_r(0)
			\label{lemlip1}
		\end{equation}
		and $x,y\in\Omega$, where $\ell_t(r):=\sup_{x\in\Omega}\int_\Omega\lambda_t(x,y)\,{\mathrm d} y<\infty$.
	\end{itemize}
\end{hyp}

\begin{lem}\label{lemlip}
	Suppose $(H_1$--$H_3)$ and $(H_5)$ hold. For each $t\in\I'$ and $r>0$ one has the Lipschitz estimate
	\begin{equation}
		\norm{\eF_t^n(u)-\eF_t^n(\bar{u})} \leq p\ell_t(r)\norm{u-\bar{u}}
		\fall n\in\N_0,\,u,\bar{u}\in U_t\cap\bar B_r(0).
		 \label{lemlip2}
	\end{equation}
\end{lem}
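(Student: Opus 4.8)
The plan is to reduce \eqref{lemlip2} to the corresponding Lipschitz bound for the Urysohn operators $\eF_t$ and then absorb the projector using the stability assumption $(H_3)$. First I would fix $t\in\I'$, $r>0$, $n\in\N_0$ and $u,\bar u\in U_t\cap\bar B_r(0)$, and note that by the very definition of $U_t$ every pointwise value satisfies $u(y),\bar u(y)\in Z_t\cap\bar B_r(0)$ for all $y\in\Omega$, so that the estimate \eqref{lemlip1} from $(H_5)$ is applicable with these arguments and with the same function $\lambda_t$ for every $x\in\Omega$.

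Next I would estimate pointwise. For each $x\in\Omega$ one has $\eF_t(u)(x)-\eF_t(\bar u)(x)=\int_\Omega\bigl[f_t(x,y,u(y))-f_t(x,y,\bar u(y))\bigr]\,{\mathrm d} y$; pulling the absolute value inside the integral, applying \eqref{lemlip1} and bounding $\abs{u(y)-\bar u(y)}\leq\norm{u-\bar u}$ leaves the integral $\int_\Omega\lambda_t(x,y)\,{\mathrm d} y$, which is at most $\ell_t(r)$ by the definition in $(H_5)$. Taking the supremum over $x\in\Omega$ yields the intermediate estimate $\norm{\eF_t(u)-\eF_t(\bar u)}\leq\ell_t(r)\norm{u-\bar u}$.

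Finally, since $\eF_t^n=\Pi_n\eF_t$ with $\Pi_n\in L(\Cd)$, I would write $\eF_t^n(u)-\eF_t^n(\bar u)=\Pi_n\bigl(\eF_t(u)-\eF_t(\bar u)\bigr)$ and conclude $\norm{\eF_t^n(u)-\eF_t^n(\bar u)}\leq\norm{\Pi_n}\,\norm{\eF_t(u)-\eF_t(\bar u)}\leq p\ell_t(r)\norm{u-\bar u}$, where $\norm{\Pi_n}\leq p$ comes from \eqref{condstab}; the case $n=0$ is covered as well since $\Pi_0=\id_{C_d}$ has norm $1\leq p$.

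There is no genuine obstacle here. The only points deserving a word of care are that $y\mapsto\lambda_t(x,y)\abs{u(y)-\bar u(y)}$ is integrable (because $\lambda_t$ is measurable in the second argument with finite $\Omega$-integral and $\abs{u-\bar u}$ is continuous, hence bounded), and that the bound $\ell_t(r)=\sup_{x\in\Omega}\int_\Omega\lambda_t(x,y)\,{\mathrm d} y$ from $(H_5)$ is uniform in $x$, so that passing to the supremum over $x\in\Omega$ in the pointwise estimate is legitimate.
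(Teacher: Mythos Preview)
Your argument is correct and is essentially the computation underlying the paper's proof, which merely cites \cite[Cor.~B.6]{poetzsche:18} together with the stability bound \eqref{condstab}; you have simply written out that cited Lipschitz estimate for the Urysohn operator explicitly and then applied $\norm{\Pi_n}\leq p$. No gap and no genuinely different idea.
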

\begin{proof}
	Using \eqref{condstab} the claim results from \cite[Cor.~B.6]{poetzsche:18}. 
\end{proof}

The local discretization error in collocation discretizations \eqref{iden} depends on the smoothness of the image functions $\eF_t(u):\Omega\to\R^d$. Thereto, given a subspace $X(\Omega)\subset\Cd$ one denotes a right-hand side \eqref{rhs} as $X(\Omega)$-\emph{smoothing}, if $\eF_t(u)\in X(\Omega)$ for all $t\in\I'$ and $u\in U_t$. For instance, convolution kernels yield that $\eF_t(u)$ have a higher order smoothness than the $f_t$ itself (cf.~\cite[pp.~50ff, Sect.~3.4]{hackbusch:95}). 

Note that for collocation methods based on polynomial interpolation the stability condition \eqref{condstab} fails even with Chebyshev nodes as collocation points (see \cite[p.~52]{hackbusch:14}). Nevertheless, the stability condition \eqref{condstab} holds when working with spline functions as basis of $X_n$. For this purpose, we restrict to $\Omega=[a,b]$, choose a grid
\begin{equation}
	a=:x_0<x_1<\ldots<x_n:=b
	\label{grid}
\end{equation}
and abbreviate $\overline{h}_n:=\max_{j=0}^{n-1}(x_{j+1}-x_j)$, $\underline{h}_n:=\min_{j=0}^{n-1}(x_{j+1}-x_j)$. Beyond that assume that the grid \eqref{grid} is extended to a strictly increasing sequence $(x_j)_{j\in\Z}$. 

Let $\beta_j^l:\R\to\R_+$, $j\in\Z$, denote the \emph{$B$-splines} of degree $l\in\N$ introduced e.g.\ in \cite[pp.~242ff]{hammerlin:hoffmann:91} and consider the subspaces
$$
	X_n:=\spann\set{\beta_{-l}^l,\ldots,\beta_{n-1}^l}\subset C[a,b]. 
$$
\begin{exam}[linear splines]\label{exp1}
	The $B$-splines $\beta_j^1:\R\to[0,1]$, $-1\leq j\leq n$ (the hat functions), form a basis of the ansatz space consisting of piecewise linear functions $X_n\subset C[a,b]$ having dimension $d_n=n+1$. The spline projections 
	$$
		\pi_nu:=\sum_{j=0}^nu(x_j)\beta_{j-1}^1
	$$
	fulfill to the interpolation conditions $(\pi_nu)(x_j)=u(x_j)$, $0\leq j\leq n$, satisfy $\norm{\pi_n}=1$ (see \cite[p.~450]{atkinson:han:05}), hence $p=1$ in \eqref{condstab}, and the error estimate \cite[p.~275]{hammerlin:hoffmann:91}
	\begin{align*}
	\norm{u-\pi_nu}
	\leq
	\tfrac{\overline{h}_n^2}{8}\norm{u''}\fall n\in\N,\,u\in C^2[a,b].
	\end{align*}
	Consequently, if \eqref{ide0} is $C^2[a,b]^d$-smoothing, then an application to \eqref{iden} yields 
	\begin{align}
		e_t^n(u)
		\leq
		\tfrac{\overline{h}_n^2}{8}\norm{\eF_t(u)''}\fall t\in\I',\,u\in U_t
		\label{b1est}
	\end{align}
	for the local discretization error. The same quadratic convergence also holds, when multidimensional piecewise linear interpolation is used on domains $\Omega\subset\R^\kappa$ (see \cite[Sect.~3.1.3]{poetzsche:18}). 
\end{exam}
\begin{exam}[quadratic splines]
	Given a grid \eqref{grid} we introduce the $n+2$ collocation points
	\begin{align*}
		\xi_0&:=a,&
		\xi_j&:=\tfrac{x_j+x_{j-1}}{2}\fall 0\leq j\leq n,&
		\xi_{n+1}=:&b.
	\end{align*}
	Then the quadratic splines $\beta_j^2:\R\to[0,1]$, $-2\leq j\leq n$, yield an ansatz space $X_n\subset C^1[a,b]$ of dimension $d_n=n+2$, where the interpolation conditions $(\pi_nu)(\xi_j)=u(\xi_j)$, $0\leq j\leq n+1$, result in a projection $\pi_n:C[a,b]\to X_n$ satisfying $\norm{\pi_n}\leq 2$ (cf.\ \cite[Cor.~3.2]{kammerer:etal:74}), thus $p=2$ in \eqref{condstab} and 
	$$
		\norm{u-\pi_nu}
		\leq
		\tfrac{\overline{h}_n^3}{24}\bigl\|u^{(3)}\bigr\|
		\fall n\in\N,\,u\in C^3[a,b]
	$$
	as interpolation error (see \cite[Thm.~6]{dubeau:savoie:96}). Hence, for $C^3[a,b]^d$-smoothing equations \eqref{ide0} the local discretization error becomes
	\begin{align*}
	e_t^n(u)
	\leq
	\tfrac{\overline{h}_n^3}{24}\bigl\|\eF_t(u)^{(3)}\bigr\|
	\fall t\in\I',\,u\in U_t. 
	\end{align*}
\end{exam}
\begin{exam}[cubic splines]
	The cubic splines $\beta_j^3:\R\to[0,1]$, $-3\leq j\leq n$, establish an ansatz space $X_n\subset C^2[a,b]$ of dimension $d_n=n+3$. Supplementing the interpolation conditions $(\pi_nu)(x_j)=u(x_j)$, $0\leq j\leq n$, by Hermite boundary conditions
	\begin{align*}
	(\pi_nu)''(a)&=u''(a),&
	(\pi_nu)''(b)&=u''(b)
	\end{align*}
	leads to a projection $\pi_n:C[a,b]\to X_n$ with $\norm{\pi_n}\leq 1+\tfrac{3}{2}\tfrac{\overline{h}_n}{\underline{h}_n}$ (cf.\ \cite[Thm.~3.1]{neuman:79}). Whence, $p:=1+\tfrac{3}{2}\sup_{n\in\N}\tfrac{\overline{h}_n}{\underline{h}_n}<\infty$ implies stability. Furthermore, the interpolation error becomes
	\begin{align*}
		\norm{u-\pi_nu}
		\leq
		\tfrac{5\overline{h}_n^4}{384}\bigl\|u^{(4)}\bigr\|
		\fall n\in\N,\,u\in C^4[a,b]
	\end{align*}
	(see \cite[Thm.~4]{hall:meyer:76}). In case \eqref{ide0} is $C^4[a,b]$-smoothing one obtains 
		\begin{align*}
		e_t^n(u)
		\leq
		\tfrac{5\overline{h}_n^4}{384}\bigl\|\eF_t(u)^{(4)}\bigr\|
		\fall t\in\I',\,u\in U_t
	\end{align*}
	as local discretization error. 
\end{exam}

In conclusion, collocation methods \eqref{iden}, $n\in\N$, based on splines having the order $l\in\set{1,2,3}$ are convergent, provided
\begin{itemize}
	\item $\lim_{n\to\infty}\overline{h}_n=0$ and

	\item the derivatives $\eF_t(\cdot)^{(l+1)}:U_t\to C[a,b]^d$ are bounded functions, that is, they map bounded subsets of $U_t$ to bounded sets uniformly in $t\in\I'$. 
\end{itemize}
\subsection{Pullback attractors}
Let $\I$ be unbounded below. An immediate consequence of \pref{propabc} is
\begin{cor}\label{corpb}
	The pullback absorbing set $\cA\subseteq\cU$ from \pref{propabc}(a) is positively invariant for every \eqref{iden}, $n\in\N_0$. 
\end{cor}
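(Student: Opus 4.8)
The plan is to check the defining inclusion $\eF_t^n(\cA(t))\subseteq\cA(t+1)$ for every $t\in\I'$ and $n\in\N_0$ directly, using the one-step growth bound already extracted in the proof of \pref{propabc}. Recall that there $\cA(\tau)=\set{u\in U_\tau:\,\norm{u}\leq\rho R_\tau}$ with
$$
	R_\tau=p\sum_{l_1=-\infty}^{\tau-1}b_{l_1}\prod_{l_2=l_1+1}^{\tau-1}pa_{l_2}\in(0,\infty)
$$
and $\rho>1$. Since $\eF_t^n(U_t)\subseteq U_{t+1}$ by $(H_2)$ and $(H_4)$, it suffices to prove that $\norm{u}\leq\rho R_t$ forces $\norm{\eF_t^n(u)}\leq\rho R_{t+1}$.

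First I would record the pointwise estimate $\abs{\eF_t^n(u)(x)}\leq pb_t+pa_t\norm{u}$ for all $x\in\Omega$, $u\in U_t$, $n\in\N_0$, which is precisely the chain of inequalities appearing in the proof of \pref{propabc}; passing to the supremum over $x\in\Omega$ gives $\norm{\eF_t^n(u)}\leq pb_t+pa_t\norm{u}$. Next I would establish the telescoping identity $R_{t+1}=pb_t+pa_tR_t$ for $t\in\I'$: in the series defining $R_{t+1}$, the summand $l_1=t$ carries the empty product $\prod_{l_2=t+1}^{t}pa_{l_2}=1$ and contributes $pb_t$, while in every summand with $l_1\leq t-1$ one factors $pa_t$ out of $\prod_{l_2=l_1+1}^{t}pa_{l_2}$, leaving $pa_tR_t$; finiteness of $R_t$ is guaranteed by \eqref{propabc1}, so this rearrangement is legitimate.

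Putting the two facts together, for any $u\in\cA(t)$ one gets
$$
	\norm{\eF_t^n(u)}\leq pb_t+pa_t\norm{u}\leq pb_t+pa_t\rho R_t\leq\rho\bigl(pb_t+pa_tR_t\bigr)=\rho R_{t+1},
$$
the third inequality being valid because $\rho>1$ and $pb_t\geq 0$ (as $p\geq 1$ and $b_t\geq 0$). Hence $\eF_t^n(u)\in\cA(t+1)$, so $\cA$ is positively invariant for each \eqref{iden}, $n\in\N_0$. I do not expect a genuine obstacle: the whole argument is a short computation, and the only step requiring mild care is the bookkeeping in the telescoping identity $R_{t+1}=pb_t+pa_tR_t$ with its infinite lower summation limit.
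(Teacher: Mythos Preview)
Your proof is correct and follows essentially the same route as the paper: both use the one-step growth bound $\norm{\eF_t^n(u)}\leq pb_t+pa_t\norm{u}$ from \pref{propabc}, derive the recursion $R_{t+1}=pb_t+pa_tR_t$ by splitting off the $l_1=t$ term, and then exploit $\rho>1$ to close the estimate. The only cosmetic difference is that the paper rewrites the final bound as $\rho R_{t+1}+p(1-\rho)b_t\leq\rho R_{t+1}$, whereas you factor out $\rho$ directly---the underlying inequality is identical.
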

\begin{proof}
	Let $n\in\N_0$, $\tau\in\I'$ and $u\in u\cap\bar B_{\rho R_\tau}(0)$. Thanks to $(H_2)$ and $(H_4)$ it remains to establish the inclusion $\eF_\tau^n(u)\subseteq\bar B_{\rho R_{\tau+1}}(0)$. Thereto, we obtain as in the proof of \pref{propabc} that
	$$
		\norm{\eF_\tau^n(u)}
		\leq
		pb_\tau+pa_\tau\norm{u}
		\leq
		pb_\tau+\rho p a_\tau R_\tau.
	$$
	In combination with
	\begin{align*}
		R_{\tau+1}
		&\stackrel{\eqref{propabc1}}{=}
		p\sum_{l_1=-\infty}^{\tau}b_{l_1}\prod_{l_2=l_1+1}^{\tau}pa_{l_2}
		=
		pa_\tau p\sum_{l_1=-\infty}^{\tau-1}b_{l_1}\prod_{l_2=l_1+1}^{\tau-1}pa_{l_2}+pb_\tau\prod_{l_2=\tau+1}^\tau pa_{l_2}\\
		&\stackrel{\eqref{propabc1}}{=}
		pa_\tau R_\tau+pb_\tau
	\end{align*}
	this implies $\norm{\eF_\tau^n(u)}\leq\rho R_{\tau+1}+p(1-\rho)b_\tau\leq\rho R_{\tau+1}$. This is the assertion. 
\end{proof}
\begin{thm}[Pullback attractors for \eqref{iden}]\label{thmpb}
	Let $(H_1$--$H_4)$ and the limit relations \eqref{propabc1} hold for all $\tau\in\I$. If $\sup_{\tau\in\I}R_\tau<\infty$, then every \eqref{iden}, $n\in\N_0$, has a pullback attractor $\cA_n^\ast\subseteq\I\tm C_d$. If $\eF_t:U_t\to U_{t+1}$ maps bounded subsets of $U_t$ into bounded sets uniformly in $t\in\I'$ and moreover 
	\begin{itemize}
		\item[(i)] $(H_5)$ is satisfied with $\ell(r):=\sup_{t\in\I'}\ell_t(r)<\infty$ for all $r>0$, 

		\item[(ii)] the collocation discretizations \eqref{iden}, $n\in\N$, are convergent 
	\end{itemize}
	hold, then $\lim_{n\to\infty}\max_{t\in I}\dist{\cAn^\ast(t),\cA_0^\ast(t)}=0$ on each bounded $I\subseteq\I$.
\end{thm}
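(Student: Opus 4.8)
The plan is to verify the hypotheses of the abstract upper-semicontinuity result \tref{thmuscpbatt} for the family of collocation discretizations \eqref{iden}, $n\in\N_0$. First I would invoke \pref{propabc}(a) together with \cref{corpb} to obtain that each \eqref{iden} has the \emph{same} pullback absorbing and positively invariant nonautonomous set $\cA=\set{(\tau,u)\in\cU:\,\norm{u}\leq\rho R_\tau}$. Next, for the existence of the pullback attractors $\cA_n^\ast\subseteq\cA$ via \tref{thmpbatt}, I would check pullback asymptotic compactness of each \eqref{iden}: this follows because the Urysohn operator $\eF_t$ is completely continuous (by continuity of the kernels $f_t$), the projectors $\Pi_n$ are bounded by $p$, hence $\eF_t^n=\Pi_n\eF_t$ maps bounded sets into relatively compact sets; composing this with the uniform absorption into the bounded set $\cA$ gives the required convergent subsequences. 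This yields the first assertion, existence of $\cA_n^\ast$ for all $n\in\N_0$.

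For the convergence statement I would verify conditions (i)--(iii) of \tref{thmuscpbatt}. Condition (ii), boundedness of $\bigcup_{n\in\N_0}\bigcup_{s\leq\tau}\cA_n^\ast(s)$, is immediate since $\cA_n^\ast(s)\subseteq\cA(s)\subseteq\bar B_{\rho R_s}(0)$ and $\sup_{\tau\in\I}R_\tau<\infty$. Condition (i), relative compactness of $\bigcup_{n\in\N_0}\cA_n^\ast(\tau)$, requires a little more: by invariance $\cA_n^\ast(\tau)=\eF_{\tau-1}^n(\cA_n^\ast(\tau-1))=\Pi_n\eF_{\tau-1}(\cA_n^\ast(\tau-1))\subseteq\Pi_n\eF_{\tau-1}(\bar B_{\rho R_{\tau-1}}(0))$; since $\eF_{\tau-1}(\bar B_{\rho R_{\tau-1}}(0))$ is relatively compact and the $\Pi_n$ are uniformly bounded with $\Pi_n\to\id$ pointwise (from convergence of the collocation, hypothesis (ii) of the theorem), the set $\bigcup_{n\in\N_0}\Pi_n\eF_{\tau-1}(\bar B_{\rho R_{\tau-1}}(0))$ is relatively compact by a standard equicontinuity/Arzel\`a--Ascoli type argument (a bounded sequence of operators applied to a precompact set stays precompact). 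Thus $\bigcup_{n\in\N_0}\cA_n^\ast(\tau)$ is relatively compact.

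The main work is condition (iii), the convergence condition: for every $\eps>0$, $S\in\N$ and compact $K\subseteq C_d$ there is $n_0$ with $\max_{0<s\leq S}\sup_{u\in U_{\tau-s}\cap K}\norm{\vphi^n(\tau;\tau-s,u)-\vphi^0(\tau;\tau-s,u)}<\eps$ for all $n>n_0$ and all $\tau\in\I$. This is exactly where \pref{properr} enters. Take the bounded nonautonomous set $\cB$ to be, on each window of length $S$, a fixed ball $\bar B_M(0)$ large enough to contain all $\vphi^0(t;\tau-s,u)$ for $u\in K$, $0\leq t-(\tau-s)\leq S$ — such an $M$ exists uniformly in $\tau$ because $K$ is bounded and $\eF_t$ maps bounded sets into bounded sets uniformly in $t\in\I'$ (this is the hypothesis ``$\eF_t:U_t\to U_{t+1}$ maps bounded subsets into bounded sets uniformly in $t$''). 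By hypothesis (i), $(H_5)$ holds with a uniform Lipschitz constant $\ell(r)=\sup_{t\in\I'}\ell_t(r)<\infty$, so via \lref{lemlip} the constants $\ell_{l_2}(B_\rho(\cB(l_2)))$ in the estimate \eqref{properr3} are bounded by $p\ell(M+\rho)$ uniformly in time, and likewise $C(\cB(l_1))$ is uniform in time; the product over a window of length at most $S$ is therefore bounded by a constant depending only on $S$, $M$, $\ell$, $p$. Since the discretizations are convergent (hypothesis (ii)), $\Gamma(1/n)\to 0$, so for $n$ large the right-hand side of \eqref{properr3} is below $\eps$ for every $\tau$ and every $0<s\leq S$. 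The key point, and the only subtlety, is that \pref{properr} as stated fixes $\tau,T$ and produces an $N_0=N_0(T)$; here I must re-run its inductive proof over a \emph{sliding} window $[\tau-s,\tau]$ of bounded length $S$ and observe that, because all the constants ($C$, $\ell$, $M$, the number of summands) are uniform in the base point thanks to the ``uniformly in $t\in\I'$'' hypotheses, the resulting $N_0$ depends only on $S$ and not on $\tau$. With (i)--(iii) verified, \tref{thmuscpbatt} yields $\lim_{n\to\infty}\max_{t\in I}\dist{\cA_n^\ast(t),\cA_0^\ast(t)}=0$ on every bounded $I\subseteq\I$, which is the claim.

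I expect the sliding-window uniformity in condition (iii) to be the main obstacle: \pref{properr} must be applied not once but along all intervals of length $\leq S$ simultaneously, and one has to track carefully that the error constant $\Gamma(1/n)\sum_{l_1}C(\cB(l_1))\prod_{l_2}\ell_{l_2}$ does not degrade as $\tau\to\infty$. This is guaranteed precisely by the two uniformity assumptions ($\eF_t$ bounded uniformly in $t$, and $\ell(r)=\sup_t\ell_t(r)<\infty$) plus the bound $\sup_\tau R_\tau<\infty$; no individual step is hard, but assembling the uniform $n_0$ requires this bookkeeping.
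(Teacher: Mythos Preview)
Your proposal is correct and follows essentially the same route as the paper: existence via \pref{propabc}(a), complete continuity of $\eF_t^n=\Pi_n\eF_t$, and \tref{thmpbatt}; then upper-semicontinuity by verifying (i)--(iii) of \tref{thmuscpbatt}, with (iii) handled through \pref{properr} and the uniform-in-$t$ Lipschitz and boundedness hypotheses exactly as you describe. One imprecision to tighten in your treatment of (i): the parenthetical ``a bounded sequence of operators applied to a precompact set stays precompact'' is false as stated---you need the pointwise convergence $\Pi_n v\to v$ on the relevant set (which here is only the range $\eF_{\tau-1}(\bar B_{\rho R_{\tau-1}}(0))$, and that is precisely what the convergence hypothesis (ii) delivers, not $\Pi_n\to\id$ on all of $C_d$); the paper makes this step explicit by an Arzel\`a--Ascoli equicontinuity argument splitting into $n\geq N$ (where $e_t^n<\eps/4$) and the finitely many $n<N$.
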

\begin{proof}
	Let $\rho>1$. We apply \tref{thmpbatt} and \ref{thmuscpbatt} to the difference equations \eqref{iden}, $n\in\N_0$, in the space $X=C_d$ equipped with the metric $d(u,v):=\norm{u-v}$. 

	By \pref{propabc}(a) the nonautonomous set $\cA:=\set{(\tau,u)\in\cU:\,\norm{u}\leq\rho R_\tau}$ is pullback absorbing for each \eqref{iden}, $n\in\N_0$. Furthermore, since the right-hand sides $\eF_t:U_t\to\Cd$ of \eqref{ide0} are completely continuous, also their discretizations $\eF_t^n=\Pi_n\eF_t$ share this property for all $t\in\I'$ and $n\in\N$. Therefore, \eqref{iden} is pullback asymptotically compact (see \cite[p.~13, Cor.~1.2.22]{poetzsche:10}) and \tref{thmpbatt} guarantees that every IDE \eqref{iden}, $n\in\N_0$, has a pullback attractor $\cA_n^\ast$ with the claimed properties.

	We verify the assumptions of \tref{thmuscpbatt}: 
	
	\emph{ad (i)}: 	By the Arzel{\`a}-Ascoli theorem \cite[p.~44, Thm.~3.3]{hirsch:lacombe:1999} we have to show that the union $\bigcup_{n\in\N_0}\cA_n^\ast(t)$ is bounded and equicontinuous for all $t\in\I$. Since boundedness is a consequence of the inclusion $\bigcup_{n\in\N_0}\cA_n^\ast(t)\subseteq\cA(t)$, it remains to establish the equicontinuity. 
	Let $n\in\N_0$. Since pullback attractors $\cA_n^\ast$ are invariant, one has
	$$
		\cA_n^\ast(t+1)=\eF_t^n(\cA_n^\ast(t))\subseteq\eF_t^n(\cA(t))\fall t\in\I'
	$$
	and consequently, $\bigcup_{n\in\N_0}\cA_n^\ast(t+1)\subseteq\bigcup_{n\in\N_0}\eF_t^n(\cA(t))$ for all $t\in\I'$. Because the subsets of equicontinuous sets are equicontinuous again, it suffices to establish equicontinuity of the union $\bigcup_{n\in\N_0}\eF_t^n(\cA(t))$ for any $t\in\I'$. 
	
	%
	
	Thereto, let $\eps>0$. Because \eqref{iden}, $n\in\N$, is assumed to be convergent, there exists an $N\in\N$ such that
	$$
		e^n_t(u)=\|\eF^n_t(u)-\eF^0_t(u)\|<\tfrac{\eps}{4} \fall u\in \cA(t),\,n\geq N.
	$$
	On the other hand, the image $\eF^0_t(\cA(t))$ due to \cite[p.~43, Prop.~3.1]{hirsch:lacombe:1999} is even uniformly equicontinuous, i.e., there exists a $\delta>0$ such that
	$$
		|x-y|<\delta
		\quad\Rightarrow\quad
		|\eF^0_t(u)(x)-\eF^0_t(u)(y)|<\tfrac{\eps}{4}\fall x,y\in\Omega.
	$$
	Combining both inequalities above, in case $|x-y|<\delta$ it results from the triangle inequality that
	\begin{align*}
		|\eF^n_t(u)(x)-\eF^n_t(u)(y)|
		\leq &\,
		|\eF^n_t(u)(x)-\eF^0_t(u)(x)| + |\eF^0_t(u)(x)-\eF^0_t(u)(y)|\\
		\leq &\,
		\tfrac{\eps}{2} + \tfrac{\eps}{4}<\eps
		\fall u\in\cA(t),\, n\geq N.
	\end{align*}
	This yields that $\eF_t^0(\cA(t))\cup\bigcup_{n\geq N}\eF_t^n(\cA(t))$ is equicontinuous and additionally including the finitely many relatively compact sets $\eF_t^n(\cA(t))$, $1\leq n<N$, preserves equicontinuity. As a result, every $\bigcup_{n\in\N_0}\cA_n^\ast(t)$, $t\in\I$, is relatively compact. 
	
	\emph{ad (ii)}: Due to $\bigcup_{n\in\N_0}\bigcup_{s\leq t}\cA_n^\ast(s)\subseteq\cA(t)$ for all $t\in\I$ we obtain the claim. 

	\emph{ad (iii)}: In order to apply \pref{properr} we note our assumption $(H_5)$ and \lref{lemlip} imply that for each $r>0$ there exists a $\ell(r)\geq 0$ such that
	$$
		\norm{\eF_t^n(u)-\eF_t^n(v)}
		\leq
		p\ell(r)\norm{u-v}\fall n\in\N,\,t\in\I'
	$$
	and $u,v\in U_t\cap\bar B_r(0)$. Hence, the collocation discretizations \eqref{iden} fulfill \eqref{properror1}. Now let $\eps>0$, $\tau\in\I'$, $S\in\N$ and $K\subseteq\Cd$ be compact. Given $u\in U_{\tau-s}\cap K$ choose a nonautonomous set $\cB\subseteq\cU$ such that
	$$
		\vphi^0(\tau;\tau-s,u)\in\cB(\tau)\fall 0<s\leq S.
	$$
	Because, by assumption, $\eF_t:U_t\to U_{t+1}$ maps bounded subsets of $U_t$ into bounded sets uniformly in $t\in\I'$, the set $\cB$ can be chosen to be bounded. Hence, there exists a $r>0$ such that $\cB(t)\subseteq\bar B_r(0)$ for all $t\in\I$. Now choose $\rho>r$ and due to \pref{properr} there exists an $N_0\in\N$ such that 
	\begin{align*}
		\norm{\vphi^n(\tau;\tau-s,u)-\vphi^0(\tau;\tau-s,u)}
		\stackrel{\eqref{properr3}}{\leq}&
		\Gamma\left(\tfrac{1}{n}\right)\sum_{l_1=\tau-s}^{\tau-1}C(\cB(l_1))\prod_{l_2=l_1+1}^{\tau-1}p\ell(\rho)\\
		\leq&
		\Gamma\left(\tfrac{1}{n}\right)C(\bar B_r(0))\sum_{l_1=\tau-s}^{\tau-1}\prod_{l_2=l_1+1}^{\tau-1}p\ell(\rho) 
	\end{align*}
	for all $0<s\leq S$ and $n>N_0$. It is easy to see that the right-hand side in the above estimate does not depend on $\tau\in\I'$. Since our convergence assumption (ii) ensures $\lim_{n\to\infty}\Gamma(\tfrac{1}{n})=0$, there exists an $n_0\geq N_0$ such that $n>n_0$ implies
	$$
		\norm{\vphi^n(\tau;\tau-s,u)-\vphi^0(\tau;\tau-s,u)}<\eps
		\fall u\in U_{\tau-s}\cap K,\,0<s\leq S.
	$$
	This finally verifies \tref{thmuscpbatt}(iii). 
\end{proof}
\subsection{Forward dynamics}
Let all nonempty, closed $Z:\equiv Z_t$ be time-invariant yielding the constant domains $U:\equiv U_t$ on a discrete interval $\I$ unbounded above.

The forward dynamics counterpart to \cref{corpb} has a slightly different structure: 
\begin{cor}\label{corfw}
	The forward absorbing set $\cA\subseteq\cU$ from \pref{propabc}(b) has constant fibers, i.e.\ there exists a $R>0$ with $R=R_\tau$ for all $\tau\in\I$. Moreover, in case
	\begin{equation}
		pb_\tau\leq\rho R(1-pa_\tau)\fall\tau\in\I
		\label{corfw1}
	\end{equation}
	the nonautonomous set $\cA$ is positively invariant for every \eqref{iden}, $n\in\N_0$. 
\end{cor}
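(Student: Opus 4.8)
The plan is to prove the two assertions in turn, the constancy of the fibers first, since positive invariance rests on it. For the constancy I would argue directly from the series representation of $R_\tau$ supplied by \pref{propabc}(b). Writing
$$
	\tfrac1p R_\tau=\lim_{t\to\infty}\sum_{l_1=\tau}^{t-1}b_{l_1}\prod_{l_2=l_1+1}^{t-1}pa_{l_2}
$$
and peeling off the summand for $l_1=\tau$ gives, for every $t$,
$$
	\sum_{l_1=\tau}^{t-1}b_{l_1}\prod_{l_2=l_1+1}^{t-1}pa_{l_2}
	=b_\tau\prod_{l_2=\tau+1}^{t-1}pa_{l_2}+\sum_{l_1=\tau+1}^{t-1}b_{l_1}\prod_{l_2=l_1+1}^{t-1}pa_{l_2}.
$$
Since $b_\tau$ is a fixed finite number and $\prod_{l_2=\tau+1}^{t-1}pa_{l_2}\to 0$ as $t\to\infty$ by the first limit relation in \eqref{propabc2} applied at initial time $\tau+1$ (which lies in $\I$, as $\I$ is unbounded above), the boundary term vanishes in the limit, leaving $R_\tau=R_{\tau+1}$. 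As this holds for every $\tau\in\I$, the value $R:=R_\tau$ is independent of $\tau$, and $R>0$ because $R_\tau\in(0,\infty)$ by \eqref{propabc2}. In particular $\cA(\tau)=U\cap\bar B_{\rho R}(0)$ for all $\tau\in\I$.

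For the positive invariance I would then follow the proof of \cref{corpb}. Fix $n\in\N_0$, $\tau\in\I'$ and $u\in\cA(\tau)$, i.e.\ $u\in U$ with $\norm{u}\leq\rho R$. By $(H_2)$ one has $\eF_\tau(u)\in U_{\tau+1}=U$, whence $(H_4)$ yields $\eF_\tau^n(u)=\Pi_n\eF_\tau(u)\in\Pi_nU\subseteq U$ (trivially so for $n=0$). The norm estimate from the proof of \pref{propabc} gives $\norm{\eF_\tau^n(u)}\leq pb_\tau+pa_\tau\norm{u}\leq pb_\tau+\rho R\,pa_\tau$, and the hypothesis \eqref{corfw1} rearranges exactly to $pb_\tau+\rho R\,pa_\tau\leq\rho R$. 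Hence $\eF_\tau^n(u)\in U\cap\bar B_{\rho R}(0)=\cA(\tau+1)$, i.e.\ $\eF_\tau^n(\cA(\tau))\subseteq\cA(\tau+1)$, which is the asserted positive invariance.

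The only genuinely new point compared with the pullback situation of \cref{corpb} — and the step I expect to be the crux — is the constancy of the fibers: in the pullback case one has the clean finite recursion $R_{\tau+1}=pa_\tau R_\tau+pb_\tau$, whereas here one must instead observe that the boundary contribution $b_\tau\prod_{l_2=\tau+1}^{t-1}pa_{l_2}$ drops out of the limit defining $R_\tau$. This is also the reason why, unlike in \cref{corpb} where $\rho>1$ alone forced positive invariance, the extra hypothesis \eqref{corfw1} is genuinely needed here. Beyond that, every step reduces to bookkeeping already carried out in \pref{propabc} and \cref{corpb}, so I expect no further obstacles.
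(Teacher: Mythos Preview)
Your proof is correct and follows essentially the same approach as the paper: the paper likewise splits off the $l_1=\tau$ summand, observes that the boundary term $pb_\tau\prod_{l_2=\tau+1}^{t-1}pa_{l_2}$ vanishes in the limit by \eqref{propabc2}, and then reuses the norm bound $\norm{\eF_\tau^n(u)}\leq pb_\tau+\rho pa_\tau R$ from \pref{propabc} together with \eqref{corfw1}. Your added remark explaining why \eqref{corfw1} is genuinely needed here (as opposed to \cref{corpb}) is a nice clarification not spelled out in the paper.
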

\begin{proof}
	Above all, for $\tau,t\in\I$ we define $r_\tau(t):=p\sum_{l_1=\tau}^{t-1}b_{l_1}\prod_{l_2=l_1+1}^{t-1}pa_{l_2}$ and passing to the limit $t\to\infty$ in
	$$
		r_\tau(t)
		=
		p\sum_{l_1=\tau+1}^{t-1}b_{l_1}\prod_{l_2=l_1+1}^{t-1}pa_{l_2}
		+
		pb_\tau\prod_{l_2=\tau+1}^{t-1}pa_{l_2}
		=
		r_{\tau+1}(t)
		+
		pb_\tau\prod_{l_2=\tau+1}^{t-1}pa_{l_2}
	$$
	implies that $R_\tau=R_{\tau+1}$ due to \eqref{propabc2}. With $R:=R_\tau$ and $u\in u\cap\bar B_{\rho R}(0)$ we obtain as in the proof of \cref{corpb} that $\norm{\eF_\tau^n(u)}\leq pb_\tau+\rho p a_\tau R$ for all $n\in\N_0$ and therefore \eqref{corfw1} yields the assertion. 
\end{proof}

\begin{thm}[Forward limit sets for \eqref{iden}]\label{thmconv}
	Suppose $(H_1$--$H_4)$, the limit relations \eqref{propabc2} and \eqref{corfw1} hold for all $\tau\in\I$ yielding the nonautonomous set $\cA\subseteq\cU$ from \pref{propabc}(b). If each \eqref{iden} is strongly $\cA$-asymp\-to\-ti\-cally compact with a compact $K_n\subseteq U$, $n\in\N_0$, then its forward limit set $\omega_{\cA,n}^+\subseteq K_n$ is nonempty, compact and forward attracts $\cA$. If moreover $K:=\bigcup_{n\in\N_0}K_n$ is bounded, $\eF_t:U\to U$ maps bounded subsets of $U$ into bounded sets uniformly in $t\in\I$ and 
	\begin{itemize}
		\item[(i)] $\omega_{\cA,0}^+$ attracts $K$ uniformly in $\tau\in\I$, i.e.,
		$$
			\lim_{s\to\infty}\sup_{\tau\in\I}\dist{\vphi^0(\tau+s;\tau,K),\omega_{\cA,0}^+}=0, 
		$$

		\item[(ii)] for every sequence $\intoo{(s_k,\tau_k)}_{k\in\N}$ in $\N\tm\I$ with $\tau_k\to\infty$ as $k\to\infty$ one has
		$$
			\lim_{k\to\infty}\dist{\vphi^n(\tau_k+s_k;\tau_k,K_n),K_n}=0\fall n\in\N_0,
		$$
		
		\item[(iii)] $(H_5)$ is satisfied with $\ell(r):=\sup_{t\in\I}\ell_t(r)<\infty$ for all $r>0$,
		
		\item[(iv)] the collocation discretizations \eqref{iden}, $n\in\N$, are convergent 
	\end{itemize}
	hold, then $\lim_{n\to\infty}\dist{\omega_{\cA,n}^+,\omega_{\cA,0}^+}=0$. 
\end{thm}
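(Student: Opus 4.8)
The plan is to obtain the statement as the integrodifference specialization of the abstract perturbation result \tref{thmusfw}, applied in the complete metric space $X=\Cd$ with $d(u,v):=\norm{u-v}$ to the collocation discretizations \eqref{iden}, $n\in\N_0$. First I would set up the objects required by \tref{thmusfw}. Since $\I$ is unbounded above and the limit relations \eqref{propabc2} hold, \pref{propabc}(b) supplies the forward absorbing set $\cA$, and \cref{corfw} shows that its fibers are the constant ball $\bar B_{\rho R}(0)\cap U$ (so $\cA$ is bounded) and, because \eqref{corfw1} is assumed, that $\cA$ is positively invariant for every \eqref{iden}, $n\in\N_0$. Strong $\cA$-asymptotic compactness is assumed directly and, for positively invariant $\cA$, implies $\cA$-asymptotic compactness; hence \tref{thmfwlim} already yields the first conclusion, namely that each $\omega_{\cA,n}^+\subseteq K_n$ is nonempty, compact and forward attracts $\cA$. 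It remains to verify the four conditions of \tref{thmusfw}.

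Conditions (i) and (ii) of \tref{thmusfw} coincide verbatim with (i), (ii) here. For the uniform-continuity condition \tref{thmusfw}(iii), I would invoke \lref{lemlip}: assumption (iii) gives $\ell(r):=\sup_{t\in\I}\ell_t(r)<\infty$ for every $r>0$. Fix $n\in\N_0$, $S\in\N$, $\eps>0$ and choose $r>0$ with $\cA(\tau)\cup K_n\subseteq\bar B_r(0)$ for all $\tau\in\I$ (possible since $\cA$ has constant bounded fibers and $K_n$ is compact). As $\eF_t^n=\Pi_n\eF_t$ has operator norm at most $p$ and $\eF_t$ maps bounded sets into bounded sets uniformly in $t$, the first $S$ iterates under $\vphi^n$ of any point of $\bar B_r(0)$ remain in a fixed ball $\bar B_{r'}(0)$, with $r'$ depending only on $r$, $S$ and the growth data, not on $\tau$ or $n$. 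Then \lref{lemlip} gives $\norm{\vphi^n(\tau+s;\tau,u)-\vphi^n(\tau+s;\tau,v)}\le(p\ell(r')+1)^S\norm{u-v}$ for $0<s\le S$ and $u,v\in\cA(\tau)\cup K_n$, so (iii) holds with $\delta:=\eps(p\ell(r')+1)^{-S}$.

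For the convergence condition \tref{thmusfw}(iv) I would apply \pref{properr}. Given $\eps>0$, $S\in\N$ and $u\in\cA(\tau)\cup K$, the orbit $\vphi^0(\tau+s;\tau,u)$, $0<s\le S$, stays in a bounded nonautonomous set $\cB$ which, since $K$ and $\cA$ are bounded and $\eF_t$ maps bounded sets into bounded sets uniformly in $t$, may be taken with constant fibers $\bar B_{r''}(0)$, $r''$ depending only on $r$, $S$ and the growth data. The Lipschitz hypothesis \eqref{properror1} of \pref{properr} is furnished by \lref{lemlip} with the uniform constant $\ell(\cdot)$, and convergence of \eqref{iden} is assumption (iv). Hence \pref{properr} produces $N_0\in\N$ such that
\[
	\norm{\vphi^n(\tau+s;\tau,u)-\vphi^0(\tau+s;\tau,u)}
	\le
	\Gamma\!\left(\tfrac1n\right)C\bigl(\bar B_{r''}(0)\bigr)\sum_{l=0}^{S-1}\bigl(p\ell(r'')+1\bigr)^{l}
\]
for all $n\ge N_0$, $0<s\le S$, $\tau\in\I$; the right-hand side is independent of $\tau$ because the constants $C(\cdot)$ and $\ell(\cdot)$ are uniform in time. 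Since $\lim_{n\to\infty}\Gamma(\tfrac1n)=0$ by convergence of \eqref{iden}, choosing $n_0\ge N_0$ large enough makes this expression $<\eps$, which is (iv). With (i)--(iv) established, \tref{thmusfw} gives $\lim_{n\to\infty}\dist{\omega_{\cA,n}^+,\omega_{\cA,0}^+}=0$.

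The main obstacle will be the uniformity-in-$\tau$ bookkeeping underlying steps (iii) and especially (iv): one must argue that over a fixed finite horizon $0<s\le S$ all orbits of all $\vphi^n$ issued from $\cA(\tau)\cup K$ stay in a single ball independent of the shifting initial time $\tau$, and that the Lipschitz and local-error constants appearing in \lref{lemlip} and \pref{properr} inherit this uniformity from $(H_5)$ with $\ell(r)=\sup_{t\in\I}\ell_t(r)<\infty$ and from the definition \eqref{errest} of convergence. Once this is in place, the remainder is a routine translation of the IDE hypotheses into those of the abstract theorems.
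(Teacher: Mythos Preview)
Your proposal is correct and follows essentially the same route as the paper: reduce to \tref{thmusfw}, use \cref{corfw} for boundedness and positive invariance of $\cA$, carry over (i)--(ii) verbatim, and derive \tref{thmusfw}(iii) from \lref{lemlip} and \tref{thmusfw}(iv) from \pref{properr} together with the convergence assumption. Your treatment of (iii) is in fact slightly more explicit than the paper's, since you spell out why the first $S$ iterates from $\cA(\tau)\cup K_n$ remain in a fixed ball independent of $\tau$ (via uniform boundedness of $\eF_t$), whereas the paper simply fixes $r$ with $\cA(\tau)\cup K\subseteq\bar B_r(0)$ and writes the Lipschitz chain $\norm{\vphi^n(\tau+s;\tau,u)-\vphi^n(\tau+s;\tau,v)}\le(p\ell(r))^s\norm{u-v}$; the only cosmetic differences are your harmless ``$+1$'' cushions on the Lipschitz factors.
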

\begin{proof}
	We aim to apply \tref{thmusfw} to \eqref{iden}, $n\in\N_0$. Above all, it results from the above \cref{corfw} that $\cA$ is forward absorbing and positively invariant for each \eqref{iden}. Then \tref{thmusfw} yields the existence of forward limit sets $\omega_{\cA,n}^+$ with the claimed properties for all $n\in\N_0$. 

	It remains to show that the assumptions (iii) and (iv) of \tref{thmusfw} hold. 
	
	\emph{ad (iii)}: Choose $r>0$ so large that $\cA(\tau)\cup K\subseteq\bar B_r(0)$ holds for all $\tau\in\I$, which is possible due to the boundedness of $\cA$ and $K$. By assumption, it results inductively from \lref{lemlip} that
	$$
		\norm{\vphi^n(\tau+s;\tau,u)-\vphi^n(\tau+s;\tau,v)}
		\leq
		(p\ell(r))^s\norm{u-v}\fall n\in\N_0,\,s\in\N
	$$
	and $\tau\in\I$, $u,v\in\cA(\tau)\cup K_n$. Let $\eps>0$ and $S\in\N$ be arbitrarily given. If we choose $\delta:=\min_{0<s\leq S}\tfrac{\eps}{\ell(r)^s}>0$, then $\norm{u-v}<\delta$ readily leads to the estimate $\norm{\vphi^n(\tau+s;\tau,u)-\vphi^n(\tau+s;\tau,v)}<\eps$ for $0<s\leq S$, which implies (iii). 

	\emph{ad (iv)}: Let $\eps>0$, $S\in\N$ and $\tau\in\I$. As in the proof of \tref{thmpb} one shows that the Lipschitz estimate \eqref{properr1} holds in our present setting with the (local) Lipschitz constant $p\ell(r)$. For $u\in\cA(\tau)\cup K$ choose a bounded nonautonomous set $\cB\subseteq\cU$ so large that $\vphi^0(t;\tau,u)\in\cB(t)$ for all $\tau\leq t\leq\tau+S$; since $\eF_t$ is assumed to be bounded uniformly in $t\in\I$ the nonautonomous set $\cB$ can be chosen to be independent of $\tau\in\I$. Furthermore, choose $r>0$ so large that $\cB(t)\subseteq\bar B_r(0)$ for all $\tau\leq t\leq\tau+S$. For $\rho>r$, then \pref{properr} implies that there is an $N_0\in\N$ with
	\begin{align*}
		\norm{\vphi^n(\tau+s;\tau,u)-\vphi^0(\tau+s;\tau,u)}
		\stackrel{\eqref{properr3}}{\leq}&
		\Gamma\left(\tfrac{1}{n}\right)\sum_{l_1=\tau}^{\tau+s-1}C(\cB(l_1))\prod_{l_2=l_1+1}^{\tau+s-1}\ell(\rho)\\
		\leq&
		\Gamma\left(\tfrac{1}{n}\right)C(\bar B_r(0))
		\sum_{l_1=\tau}^{\tau+s-1}\prod_{l_2=l_1+1}^{\tau+s-1}\ell(\rho)
	\end{align*}
	for all $0<s\leq S$ and $n>N_0$. Note that the right-hand side of this inequality does not depend on $\tau\in\I$. Consequently, our convergence condition $\lim_{n\to\infty}\Gamma(\tfrac{1}{n})=0$ implies that there exists an $n_0\geq N_0$ such that $n>n_0$ implies
	$$
		\norm{\vphi^n(\tau+s;\tau,u)-\vphi^0(\tau+s;\tau,u)}
		<
		\eps\fall u\in\cA(\tau)\cap K,\,0<s\leq S
	$$
	and therefore \tref{thmusfw}(iii) is verified. 
\end{proof}

\begin{thm}[Persistence of forward attractors for \eqref{iden}]
	Let $\I=\Z$. Suppose $(H_1$--$H_4)$, the limit relations \eqref{propabc2} and \eqref{corfw1} hold for all $\tau\in\Z$ yielding the nonautonomous set $\cA\subseteq\cU$ from \pref{propabc}(b). If each \eqref{iden} is strongly $\cA$-asymp\-to\-ti\-cally compact with a compact $K_n\subseteq U$, $n\in\N_0$, and 
	\begin{itemize}
		\item[(i)] for every sequence $\intoo{(s_k,\tau_k)}_{k\in\N}$ in $\N\tm\Z$ with $\tau_k\to\infty$ as $k\to\infty$ one has
		$$
			\lim_{k\to\infty}\dist{\vphi^n(\tau_k+s_k;\tau_k,K_n),K_n}=0\fall n\in\N_0,
		$$
		
		\item[(ii)] $(H_5)$ is satisfied with $\ell(r):=\sup_{t\in\I}\ell_t(r)<\infty$ for all $r>0$,
		
		\item[(iii)] the forward limit sets $\omega_{\cA,n}^+$ of \eqref{iden} satisfy
		$$
			\omega_{\cA,n}^+=\limsup_{t\to\infty}\cA_n^\ast(t)\fall n\in\N_0
		$$
		with $\cA_n^\star(t):=\bigcap_{0\leq s}\vphi^n(t;t-s,\cA(t-s))$ for all $t\in\Z$
	\end{itemize}
	hold, then every \eqref{iden}, $n\in\N_0$, has $\cA_n^\star\subseteq\cA$ as a forward attractor. 
\end{thm}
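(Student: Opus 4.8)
The plan is to verify the hypotheses of Theorem~\ref{thmfwatt} for each collocation discretization \eqref{iden}, $n\in\N_0$, since the conclusion is precisely that $\cA_n^\star$ is a forward attractor. First I would invoke Corollary~\ref{corfw}: the limit relations \eqref{propabc2} together with \eqref{corfw1} guarantee that the nonautonomous set $\cA$ from Proposition~\ref{propabc}(b) has constant fibers and is both forward absorbing and positively invariant for every \eqref{iden}; moreover $\cA$ is closed (being defined by a non-strict norm bound) and bounded (by $\sup_{\tau\in\I}R_\tau<\infty$). So the structural assumption on $\cA$ in Theorem~\ref{thmfwatt} is met for each $n\in\N_0$.

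Next I would check the compactness assumptions. Pullback asymptotic compactness of \eqref{iden} follows exactly as in the proof of Theorem~\ref{thmpb}: the Urysohn operators $\eF_t$ are completely continuous, hence so are the discretizations $\eF_t^n=\Pi_n\eF_t$ for $n\in\N$, and complete continuity of the right-hand side yields pullback asymptotic compactness (via \cite[p.~13, Cor.~1.2.22]{poetzsche:10}). Strong $\cA$-asymptotic compactness with compact $K_n\subseteq U$ is assumed in the hypothesis. Assumption (i) of Theorem~\ref{thmfwatt}---the condition on $\dist{\vphi^n(\tau_k+s_k;\tau_k,K_n),K_n}\to 0$---is exactly our assumption (i) here. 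So the remaining work is to produce assumptions (ii) and (iii) of Theorem~\ref{thmfwatt} from our (ii) and (iii).

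For assumption (ii) of Theorem~\ref{thmfwatt} (uniform continuity of the solution maps on $\cA(\tau)\cup K_n$ over finite horizons), I would argue exactly as in the \emph{ad (iii)} part of the proof of Theorem~\ref{thmconv}: since $\cA$ has constant, bounded fibers and each $K_n$ is compact, choose $r>0$ with $\cA(\tau)\cup K_n\subseteq\bar B_r(0)$ for all $\tau$; then our assumption (ii) here ($\,(H_5)$ with $\ell(r)=\sup_{t}\ell_t(r)<\infty$) combined with Lemma~\ref{lemlip} gives inductively $\norm{\vphi^n(\tau+s;\tau,u)-\vphi^n(\tau+s;\tau,v)}\leq(p\ell(r))^s\norm{u-v}$ on $\bar B_r(0)$, and choosing $\delta:=\min_{0<s\leq S}\eps/(p\ell(r))^s$ (replacing the minor $\ell(r)^s$ typo in that earlier proof by $(p\ell(r))^s$) yields the implication. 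Assumption (iii) of Theorem~\ref{thmfwatt} ($\omega_{\cA}^+=\limsup_{t\to\infty}\cA^\star(t)$) is exactly our assumption (iii) here, stated per $n$. Applying Theorem~\ref{thmfwatt} to each \eqref{iden} then gives that $\cA_n^\star\subseteq\cA$ is a forward attractor, which is the claim.

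The main obstacle, honestly, is bookkeeping rather than a deep difficulty: one must make sure that the constants $p$, $\ell(r)$, and the compact sets $K_n$ enter the inequalities uniformly enough, and that the Lipschitz constant appearing in the telescoping estimate is $p\ell(r)$ and not $\ell(r)$. There is genuinely no hard step here because, unlike Theorems~\ref{thmpb} and~\ref{thmconv}, this persistence statement does \emph{not} assert convergence $\cA_n^\star\to\cA_0^\star$---it only asserts that each perturbed equation inherits a forward attractor of the specific form $\cA_n^\star$. Hence no convergence condition and no use of Proposition~\ref{properr} is needed; the whole proof is an application of Theorem~\ref{thmfwatt} equation by equation, with the compactness inputs supplied by complete continuity of the Urysohn operators and the Lipschitz input supplied by Lemma~\ref{lemlip}.
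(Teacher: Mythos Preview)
Your proposal is correct and follows essentially the same route as the paper: verify the hypotheses of \tref{thmfwatt} for each \eqref{iden} via \cref{corfw} (closed, forward absorbing, positively invariant $\cA$), complete continuity of $\eF_t^n$ for pullback asymptotic compactness, the stated hypotheses for (i) and (iii), and the Lipschitz argument from the proof of \tref{thmconv} for (ii). Your observation that no convergence condition or \pref{properr} is needed is exactly right, and the paper's proof is correspondingly brief.
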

\begin{proof}
	Let $n\in\N_0$. We verify that \eqref{iden} satisfies the assumptions of \tref{thmfwatt}. First of all, \cref{corfw} ensures that the closed set $\cA$ is forward absorbing and positively invariant w.r.t.\ \eqref{iden}. Because each $\eF_t:U\to\Cd$ is completely continuous, also the discretizations $\eF_t^n=\Pi_n\eF_t$ have this property for all $t\in\Z$. Therefore, \eqref{iden} is pullback asymptotically compact (see \cite[p.~13, Cor.~1.2.22]{poetzsche:10}). Our assumption (i) yields that \eqref{iden} fulfills the assumption in \tref{thmfwatt}(i). As in the above proof of \tref{thmconv} one shows that also \tref{thmfwatt}(ii) holds for \eqref{iden}. Finally, (iii) ensures that finally the assumption in \tref{thmfwatt}(iii) is satisfied for \eqref{iden}. 
\end{proof}
\section{Illustrations}
\label{sec4}
Let $\I$ be a discrete interval and $\Omega\subset\R^\kappa$ be a compact habitat. We are discussing IDEs defined on the (closed) cone
$$
	U=C_+(\Omega):=\set{u\in C(\Omega):\,0\leq u(x)\text{ for all }x\in\Omega}
$$
of nonnegative continuous functions. 
\subsection{Pullback attractors for generalized Beverton-Holt equations}
Consider the scalar nonautonomous IDE
\begin{align} \label{BH}
u_{t+1}&=\eF_t(u_t),&
\eF_t(u)&:=\int_\Omega k_t(\cdot,y)\frac{\gamma_t(y)u(y)}{1+u(y)^\alpha}\,{\mathrm d} y
\end{align}
depending on a parameter $\alpha>0$ with continuous kernels $k_t:\Omega^2\to\R_+$ and integrable growth rates $\gamma_t:\Omega\to\R_+$, $t\in\I'$.
\begin{SCfigure}[2]
	\includegraphics[scale=0.5]{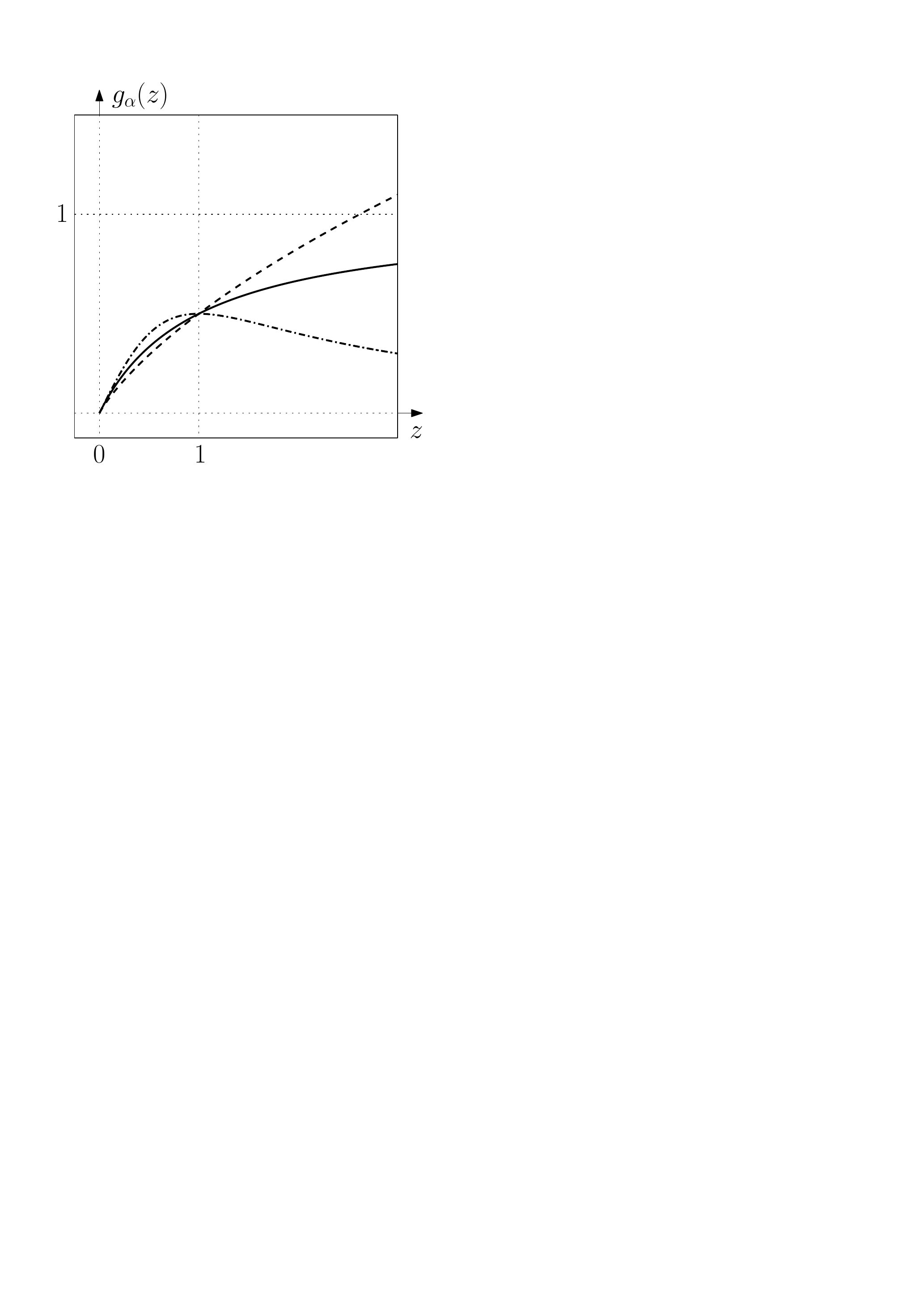}
	\caption{Shape of generalized Bev\-er\-ton-Holt functions $g_\alpha(z):=\tfrac{z}{1+z^\alpha}$ for $\alpha=\tfrac{1}{2}$ (dashed), $\alpha=1$ (solid) and $\alpha=2$ (dash dotted)}
	\label{figbh}
\end{SCfigure}

The nonlinearity in this Hammerstein IDE is given by the family of generalized Beverton-Holt functions $g_\alpha:\R_+\to\R_+$, $g_\alpha(z):=\tfrac{z}{1+z^\alpha}$, whose behavior depends on $\alpha>0$ (see \fref{figbh}). All these growth functions have in common to be globally Lipschitz with constant $1$. Consequently, in order to fulfill the assumptions of \lref{lemlip} we choose $\lambda_t(x,y):=k_t(x,y)\gamma_t(y)$ guaranteeing even a global Lipschitz estimate in \eqref{lemlip2} with
$$
	\ell_t:=\sup_{x\in\Omega}\int_\Omega k_t(x,y)\gamma_t(y)\,{\mathrm d} y
	\fall t\in\I'. 
$$
In order to establish that the generalized Beverton-Holt equation \eqref{BH} is dissipative using \pref{propabc}, we suppose that $(\ell_t)_{t\in\I'}$ is bounded and proceed as follows: 
\begin{itemize}
	\item For $\alpha\in(0,1)$ the function $g_\alpha$ is unbounded and its tangent in a point $(\zeta,g_\alpha(\zeta))$ for $\zeta>0$ reads as
$$
	z
	\mapsto
	\tfrac{1+(1-\alpha)\zeta^\alpha}{(1+\zeta^\alpha)^2}(z-\zeta)+\tfrac{\zeta}{1+\zeta^\alpha}
	= 
	\tfrac{\alpha\zeta^{1+\alpha}}{(1+\zeta^\alpha)^2}
	+
	\tfrac{1+(1-\alpha)\zeta^\alpha}{(1+\zeta^\alpha)^2}z.
$$
Concerning the estimate \eqref{assumpK0} this allows us to choose
\begin{align*}
	\alpha_t(x,y)&:=\tfrac{1+(1-\alpha)\zeta^\alpha}{(1+\zeta^\alpha)^2}k_t(x,y)\gamma_t(y),&
	\beta_t(x,y)&:=\tfrac{\alpha\zeta^{1+\alpha}}{(1+\zeta^\alpha)^2}k_t(x,y)\gamma_t(y)
\end{align*}
	and consequently
	\begin{align*}
		a_t&=\tfrac{1+(1-\alpha)\zeta^\alpha}{(1+\zeta^\alpha)^2}\ell_t,&
		b_t&=\tfrac{\alpha\zeta^{1+\alpha}}{(1+\zeta^\alpha)^2}\ell_t.
	\end{align*}
	If $\zeta>0$ is chosen so large that 
	$$
		\tfrac{1+(1-\alpha)\zeta^\alpha}{(1+\zeta^\alpha)^2}\limsup_{t\to-\infty}\ell_t
		<
		1\quad\text{on $\I$ unbounded below}, 
	$$
	then \eqref{BH} is pullback dissipative, while 
	$$
		\tfrac{1+(1-\alpha)\zeta^\alpha}{(1+\zeta^\alpha)^2}\limsup_{t\to\infty}\ell_t
		<
		1\quad\text{on $\I$ unbounded above}
	$$
	yields forward dissipativity of the Beverton-Holt equation \eqref{BH}. 
	
	\item For $\alpha\geq 1$, the function $g_\alpha$ is globally bounded with
$$
	\sup_{z\geq 0}g_\alpha(z)
	=
	\tfrac{1}{\alpha}(\alpha-1)^{1-\tfrac{1}{\alpha}}
$$
and therefore in \eqref{assumpK0} we can choose
\begin{align*}
	\alpha_t(x,y)&:\equiv 0,&
	\beta_t(x,y)&:=\tfrac{1}{\alpha}(\alpha-1)^{1-\tfrac{1}{\alpha}}k_t(x,y)\gamma_t(y),
\end{align*}
	which results in
	\begin{align*}
		a_t&\equiv 0,&
		b_t&=\tfrac{1}{\alpha}(\alpha-1)^{1-\tfrac{1}{\alpha}}\ell_t.
	\end{align*}
	Consequently, the IDE \eqref{BH} is both forward and pullback dissipative. 
\end{itemize}
\begin{figure}
	\includegraphics[trim=0.65cm 0.65cm 0.65cm 0.65cm,clip,width=0.325\linewidth]{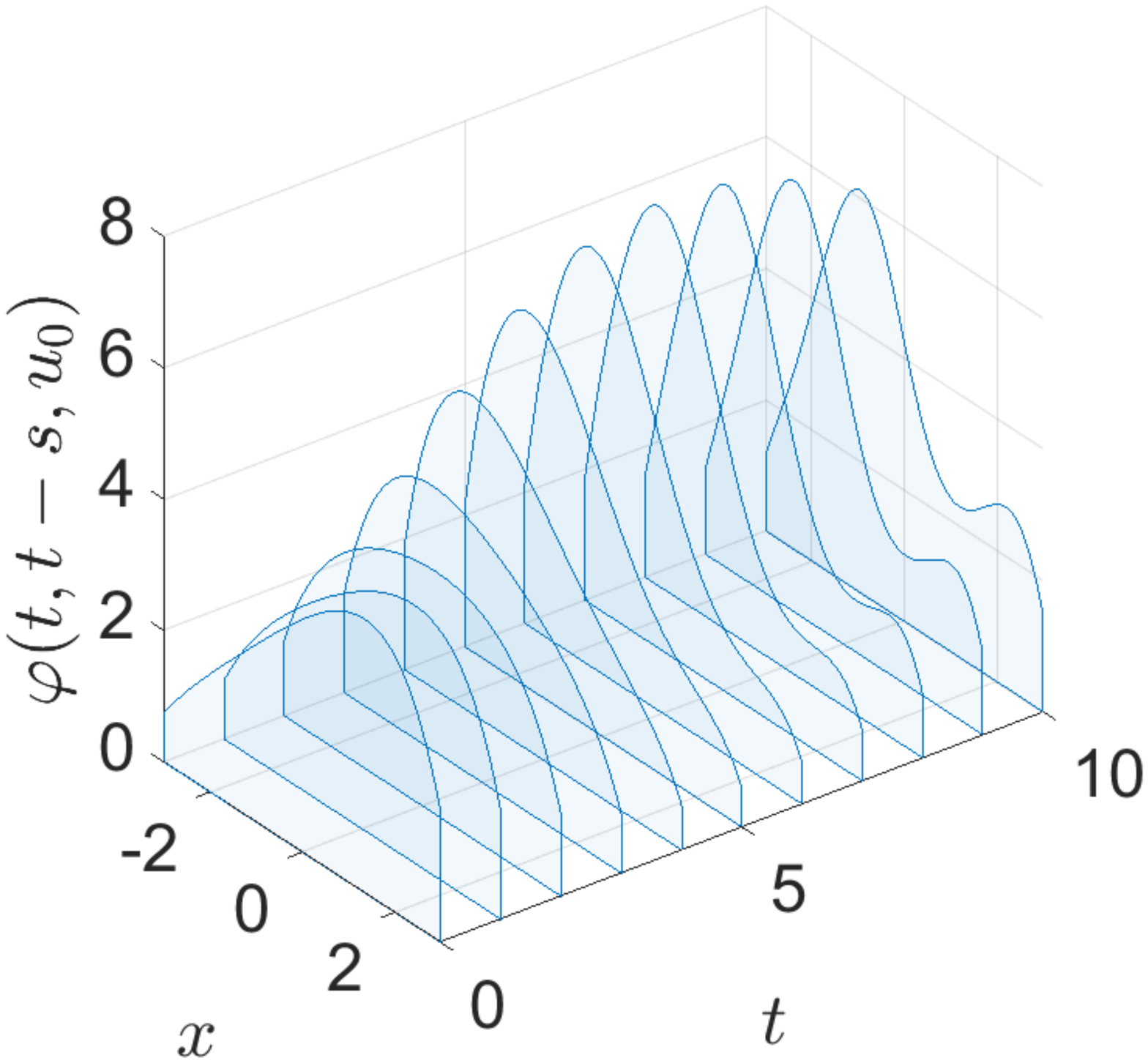}
	\includegraphics[trim=0.65cm 0.65cm 0.65cm 0.65cm,clip,width=0.325\linewidth]{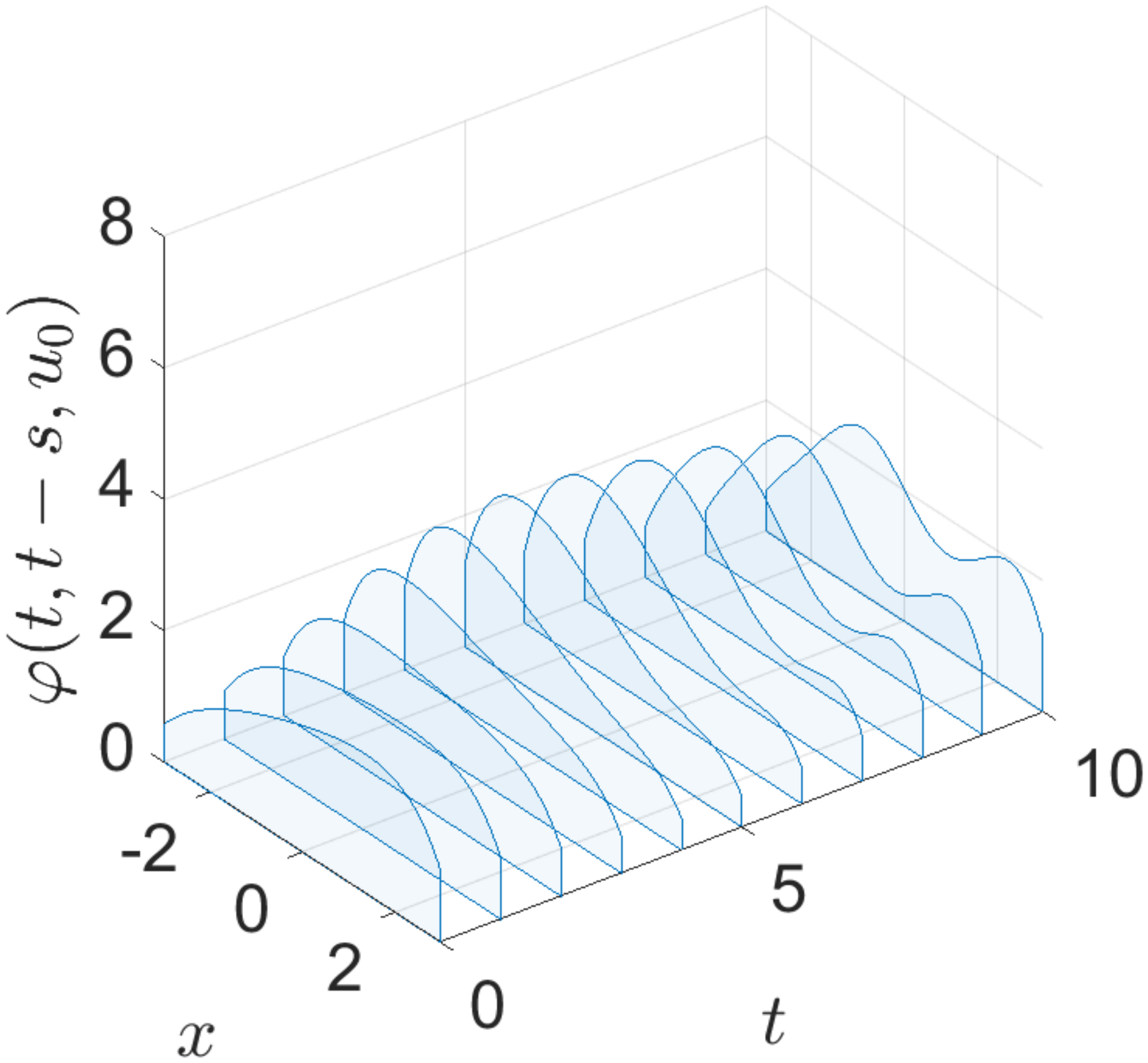}
	\vspace{-20pt}
	\caption{Sequences of sets containing functions in the fibers $\cA_n^\ast(t)$, $0\leq t\leq 10$, for $\alpha\in\set{\tfrac{1}{2},1}$, respectively from left to right.}
	\label{figpb}
\end{figure}
Now let $\I$ be unbounded below. In any case, we conclude from \tref{thmpbatt} that the nonautonomous Beverton-Holt equation \eqref{BH} has a pullback attractor $\cA_0^\ast$. In particular, for parameters $\alpha\in(0,1]$ the growth function $g_\alpha$ is strictly increasing and therefore the right-hand sides of \eqref{BH} are order-preserving. This means that for all $u,v\in U$ the implication
$$
	u(x)\leq v(x)
	\quad\Rightarrow\quad
	\eF_t^n(u)(x)\leq\eF_t(v)(x)\fall t\in\I',\,x\in\Omega
$$
holds. Given a sequence $(\xi_t)_{t\in\I}$ in $C_+(\Omega)$ we introduce the order intervals
$$
	[0,\xi]
	:=
	\set{(t,v)\in\I\tm C(\Omega):\,0\leq v(x)\leq\xi_t(x)\text{ for all }x\in\Omega}
	\subseteq
	\I\tm C_+(\Omega)
$$
and obtain from \cite[Prop.~8]{poetzsche:15} that the pullback attractor $\cA_0^\ast$ of \eqref{BH} satisfies the inclusion
$
	\cA_0^\ast
	\subseteq
	[0,\xi^0]
$
with a bounded entire solution $(\xi_t^0)_{t\in\I}$ to \eqref{BH}. This entire solution is contained in $\cA_0^\ast$ due to the characterization \eqref{besol}. In addition, $\xi^0$ is the pullback limit for all sequences starting from 'above' $\xi^0$ (cf.~\cite[Prop.~8]{poetzsche:15}). 

Based on \tref{thmpb} also stable collocation discretizations \eqref{iden}, $n\in\N$, of the generalized Beverton-Holt equation \eqref{BH} possess pullback attractors $\cA_n^\ast$. In order to illustrate convergence of the pullback attractors $\cA^\ast_n$ to $\cA_0^\ast$ as $n\to\infty$, we restrict to habitats $\Omega=[a,b]$ and piecewise linear splines from \eref{exp1} having the nodes $x_j:=a+j\tfrac{b-a}{n}$, $0\leq j\leq n$, for all $n\in\N$. This results in the spatial discretizations
\begin{align}
	u_{t+1}&=\eF_t^n(u_t),&
	\eF_t^n(u)&=
	\sum_{j=0}^n\int_\Omega k_t(x_j,y)\frac{\gamma_t(y)u(y)}{1+u(y)^\alpha}\,{\mathrm d} y\beta_{j-1}^1.
	\label{BHn}
\end{align}
In particular, $p=1$ and $\overline{h}_n=\tfrac{b-a}{n}$ for all $n\in\N$ and for appropriate kernels $k_t$ one obtains the convergence rate $2$ from \eqref{b1est}. 

Spatial discretization using piecewise linear functions guarantees that also the right-hand sides $\eF_t^n:U\to X_n$, $t\in\I'$, are order preserving for $\alpha\in(0,1]$. Thus, \cite[Prop.~8]{poetzsche:15} applies to \eqref{BHn} as well, and ensures that its pullback attractors fulfill 
$
	\cA^\ast_n
	\subseteq
	[0,\xi^n]
$
with bounded entire solutions $(\xi_t^n)_{t\in\I}$ to \eqref{BHn} in $\cA_n^\ast$. 
\begin{table}
\begin{center}
\begin{tabular}{ r || c | c }
	$n$ & $\alpha=\tfrac{1}{2}$ & $\alpha=1$ \\
	\hline \rule{0pt}{15pt}
	16 & 2.112614126300029 & 2.100856100109834 \\
	32 & 2.055209004601208 & 2.051123510423984 \\
	64 & 2.026777868073563 & 2.025681916479720 \\
	128 & 2.013096435137189 & 2.012865860858589 \\
	256 & 2.006536458546063 & 2.006433295172438 \\
	512 & 2.003256451919377 & 2.003220576642864 \\
	1024 & 2.001624177537549 & 2.001610772707442
\end{tabular}
\caption{Values of the approximate convergence rates $c(n)$ for $n\in\set{2^4,\ldots,2^{10}}$ nodes and parameters $\alpha\in\set{\tfrac{1}{2},1,2}$.}
\label{tabpb1}
\end{center}
\end{table}

On this basis, in order to quantify convergence rates of the Hausdorff distances $\dist{\cA_n^\ast(t),\cA_0^\ast(t)}$ between the fibers of the pullback attractors to \eqref{BHn} and $\cA_0^\ast$ guaranteed by \tref{thmpb}, we make use the entire solutions $\xi^n$ and approximate the rates of their convergence to $\xi^0$ as $n\to\infty$. For this purpose, choose the habitat $\Omega=[-3,3]$, the Laplace kernel $k_t(x,y):=\tfrac{\delta_t}{2}e^{-\delta_t|x-y|}$ yielding $C^2[-3,3]$-smoothness in the right-hand side of \eqref{BH} for the almost periodic dispersal rates $\delta_t:=2+\sin(\tfrac{t}{3})$, the growth rates $\gamma_t(x):=3-\sin(\tfrac{tx}{5})$ and $n=4096$. In order to evaluate the remaining integrals in \eqref{BHn} we apply the trapezoidal rule and the functions $\xi_t^n$ are approximated as pullback limits $\vphi^n(t;t-s,\xi)$, $t\in\I$, with $s:=15$ and upper solutions $\xi$. Then the sequence $c(n):=\log_2\tfrac{\|\xi^n-\xi^{2n}\|}{\|\xi^{2n}-\xi^{4n}\|}$ approximates the desired convergence rates for $\xi^n$ to $\xi^0$ given large values of $n$. This results in the values listed in Tab.~\ref{tabpb1} which where illustrated in \fref{convratepb}, respectively. Clearly, the rate $2$ (cf.~\eref{exp1}) is preserved. 
\begin{figure}
\vspace{-80pt}
\includegraphics[trim=0.5cm 0.5cm 0.5cm 0.5cm,clip,width=0.65\linewidth]{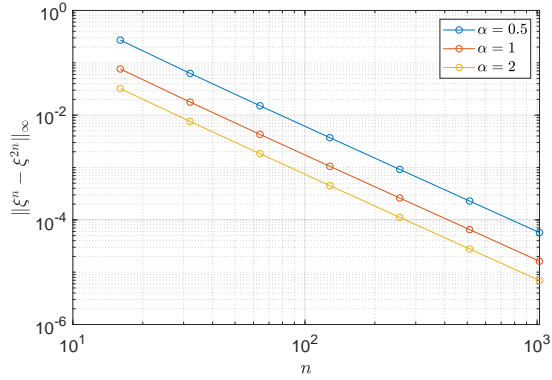}
\vspace{-90pt}
\caption{Development of the averaged error $\|\xi^n-\xi^{2n}\|$ for $n\in\set{2^4,\ldots,2^{10}}$ nodes and parameters $\alpha\in\set{\tfrac{1}{2},1,2}$.}
\label{convratepb}
\end{figure}
\subsection{Forward limit sets for asymptotically autonomous Ricker equations}
Consider the scalar nonautonomous IDE
\begin{align}
	u_{t+1}&=\eF_t(u_t),&
	\eF_t(u)&:=\gamma_t\int_\Omega k(\cdot,y)u_t(y)e^{-u_t(y)}\,{\mathrm d} y+b
	\label{exrick}
\end{align}
with a continuous kernel $k:\Omega\tm\Omega\to\R_+$, an inhomogeneity $b\in C_+(\Omega)$ and a bounded sequence of growth rates $(\gamma_t)_{t\in\I}$ in $\R_+$. For the Ricker growth function $g:\R_+\to\R_+$, $g(z):=ze^{-z}$ it is elementary to show
\begin{align*}
	\abs{g(z)}&\leq\tfrac{1}{e},&
	\abs{g(z)-g(\bar z)}&\leq\tfrac{1}{e^2}\abs{z-\bar z}\fall z,\bar z\in\R_+.
\end{align*}
Given this, on intervals $\I$ unbounded below it is not difficult to establish that \eqref{exrick} has a pullback attractor. However, we are interested in the forward dynamics of \eqref{exrick} under discretization. For this purpose, let us suppose that $\I$ is bounded below, but unbounded above. Moreover, it is crucial to assume that \eqref{exrick} is asymptotically autonomous in the sense that the coefficient sequence $(\gamma_t)_{t\in\I}$ converges exponentially. In order to be more precise, let us assume that there exist reals $\gamma>0$ and $\tilde{K}_0,\tilde{K}_1\geq 1$ such that the following holds with $k_0:=\sup_{x\in\Omega}\int_\Omega k(x,y)\,{\mathrm d} y$: 
\begin{itemize}
	\item $\tilde{K}_0:=\sup_{s\leq t}\prod_{l=s}^{t-1}\tfrac{\gamma_l}{\gamma}<\infty$ and $\gamma k_0<1$, 

	\item $\abs{\gamma_t-\gamma}\leq \tilde{K}_1(k_0\gamma)^t$, which implies $\lim_{t\to\infty}\gamma_t=\gamma$, 

	\item $k_0\sup_{t\in\I}\gamma_t<\tfrac{e^2}{1+e^2}\tfrac{1-k_0\gamma}{\tilde{K}_0}$.
\end{itemize}
As a result, it is shown in \cite[Exam.~5.6]{huynh:poetzsche:kloeden:20} that the autonomous limit equation
\begin{align}
	u_{t+1}&=\eF(u_t),&
	\eF(u)&:=\gamma\int_\Omega k(\cdot,y)u_t(y)e^{-u_t(y)}\,{\mathrm d} y+b
	\label{exricka}
\end{align}
has a singleton global attractor $A^\ast=\set{u^\ast}$ and for any bounded set $A\subseteq C_+(\Omega)$ absorbing for \eqref{exricka} the forward limit set of \eqref{exrick} arising from $\cA:=\I\tm A$ is just $\omega_{\cA,0}^+=\set{u^\ast}$. 

In addition, it can be shown using \cite[Thm.~4.14]{huynh:poetzsche:kloeden:20} that for every bounded subset $B\subseteq C_+(\Omega)$ one has the limit relation
$$
	\lim_{s\to\infty}\sup_{b\in B}\norm{\vphi^0(\tau+s;\tau,b)-\eF^s(b)}=0\quad\text{uniformly in }\tau\in\I;
$$
for this $\I$ needs to be bounded below. Consequently, passing to the least upper bound over $b\in B$ in the inequality
$$
	\inf_{a\in B}\norm{\vphi^0(\tau+s;\tau,b)-\eF^s(a)}
	\leq
	\norm{\vphi^0(\tau+s;\tau,b)-\eF^s(b)}\fall b\in B
$$
leads to the limit relation
$$
	0
	\leq
	\dist{\vphi^0(\tau+s;\tau,B),\eF^s(B)}
	\leq
	\sup_{b\in B}\norm{\vphi^0(\tau+s;\tau,b)-\eF^s(b)}
	\xrightarrow[s\to\infty]{} 0
$$
uniformly in the initial time $\tau\in\I$. Since the global attractor $A^\ast$ of \eqref{exricka} attracts bounded subsets $B\subseteq X$, one therefore arrives at
\begin{align}
	&
	\dist{\vphi^0(\tau+s;\tau,B),\omega_{\cA,0}^+}
	\notag\\
	\stackrel{\eqref{hs2}}{\leq}&
	\dist{\vphi^0(\tau+s;\tau,B),\eF^s(B)}+\dist{\eF^s(B),\omega_{\cA,0}^+}
	\notag\\
	=&
	\dist{\vphi^0(\tau+s;\tau,B),\eF^s(B)}+\dist{\eF^s(B),A^\ast}
	\xrightarrow[s\to\infty]{} 0
	\label{unilimest}
\end{align}
uniformly in $\tau\in\I$. 

Along with the asymptotically autonomous Ricker equation \eqref{exrick} we now turn to their convergent collocation discretizations \eqref{iden}, $n\in\N$, satisfying $(H_3$--$H_4)$. With the abbreviations
\begin{align*}
	R&:=\tfrac{pk_0}{e}\sup_{t\in\I}\gamma_t,&
	b_n&:=\pi_nb\in X_n
\end{align*}
one can show that the nonautonomous set
\begin{align*}
	\cA&:=\Bigl\{(t,x)\in\cU:\,u\in\overline{\bigcup_{n\in\N_0}\eF_{t^\ast}^n(U\cap B_R(b_n))}\Bigr\},&
	t^\ast&:=
	\begin{cases}
		t-1,&t>\min\I,\\
		t,&t=\min\I
	\end{cases}
\end{align*}
is bounded, compact, positively invariant and forward absorbing w.r.t.\ every \eqref{iden}, $n\in\N_0$, with absorption time $2$. In particular, each fiber $\cA(t)$, $t\in\I$, is compact, since every $\eF_{t^\ast}^n(U\cap B_R(b_n))$ is relatively compact and \eqref{iden}, $n\in\N$, is convergent. 

The above exponential convergence assumption for $(\gamma_t)_{t\in\I}$ implies that the union $\bigcup_{t\in\I}\eF_t^n(U\cap B_R(b_n))$ is relatively compact. Thanks to the Lipschitz estimate
$$
	\norm{\eF_t^n(u)-\eF_t^n(v)}
	\leq
	\tfrac{pk_0}{e^2}\sup_{t\in\I}\gamma_t\norm{u-v}
	\fall n\in\N_0,\,t\in\I,\,u,v\in U, 
$$
we can apply \cite[Thm.~5.5]{huynh:poetzsche:kloeden:20} in order to obtain that the forward limit sets $\omega_{\cA,n}^+$ of \eqref{iden}, $n\in\N$, are asymptotically negatively invariant (cf.\ the proof of \tref{thmusfw}). 

Because $K:=\bigcup_{n\in\N_0}K_n$ with $K_n:=\overline{\bigcup_{t\in\I}\eF_t^n(U\cap B_R(b_n))}$ is bounded, we obtain from the uniform limit relation \eqref{unilimest} established above that the assumption in \tref{thmconv}(i) is satisfied. Combining this with the asymptotic negative invariance of each $\omega_{\cA,n}^+$, $n\in\N_0$, we conclude as in \tref{thmconv} that the forward limit sets of the collocation discretizations \eqref{iden}, $n\in\N$, fulfill
$$
	\lim_{n\to\infty}\dist{\omega_{\cA,n}^+,\omega_{\cA,0}^+}
	=
	\lim_{n\to\infty}
	\sup_{a\in\omega_{\cA,n}^+}\norm{a-u^\ast}
	=
	0.
$$
\providecommand{\bysame}{\leavevmode\hbox to3em{\hrulefill}\thinspace}
\providecommand{\MR}{\relax\ifhmode\unskip\space\fi MR }
\providecommand{\MRhref}[2]{\href{http://www.ams.org/mathscinet-getitem?mr=#1}{#2}}
\providecommand{\href}[2]{#2}

\end{document}